\newtheorem{theorem}{Theorem}
\newtheorem{lemma}[theorem]{Lemma}
\newtheorem{proposition}[theorem]{Proposition}
\theoremstyle{remark}
\newtheorem{definition}[theorem]{Definition}
\theoremstyle{remark}
\theoremstyle{remark}
\newtheorem{remark}[theorem]{Remark}
\newcommand{\R}{{\mathbb R}}
\newcommand{\N}{{\mathbb N}}
\newcommand{\Z}{{\mathbb Z}}
\newcommand{\K}{{\mathbb K}}
\newcommand{\la}{\langle}
\newcommand{\ra}{\rangle}
\newcommand{\Rp}{{\R_+}}
\newcommand{\FCK}{\mathscr{FC}(\K(X))}
\newcommand{\FCG}{\mathscr{FC}(\Gamma_{pf}(\hat X))}
\renewcommand{\hat}{\widehat}
\begin{document}

\begin{center}{\Large \bf
Equilibrium diffusion  on  the cone of discrete Radon measures
}\end{center}

{\large Diana Conache}\\
 Fakult\"at f\"ur Mathematik, Universit\"at
Bielefeld, Postfach 10 01 31, D-33501 Bielefeld, Germany; \\
 e-mail:
\texttt{dputan@math.uni-bielefeld.de}\vspace{2mm}

{\large Yuri G. Kondratiev}\\
 Fakult\"at f\"ur Mathematik, Universit\"at
Bielefeld, Postfach 10 01 31, D-33501 Bielefeld, Germany;  NPU, Kyiv, Ukraine\\
 e-mail:
\texttt{kondrat@math.uni-bielefeld.de}\vspace{2mm}

{\large Eugene Lytvynov}\\ Department of Mathematics,
Swansea University, Singleton Park, Swansea SA2 8PP, U.K.\\
e-mail: \texttt{e.lytvynov@swansea.ac.uk}\vspace{2mm}

{\small

\begin{center}
{\bf Abstract}
\end{center}
\noindent Let $\mathbb K(\mathbb R^d)$ denote the cone of discrete Radon measures on $\mathbb R^d$. There is a natural  differentiation on $\mathbb K(\mathbb R^d)$: for a differentiable function $F:\mathbb K(\mathbb R^d)\to\mathbb R$, one defines its gradient $\nabla^{\mathbb K} F $ as a vector field  which assigns to each $\eta\in \mathbb K(\mathbb R^d)$ an element of a tangent space $T_\eta(\mathbb K(\mathbb R^d))$ to $\mathbb K(\mathbb R^d)$ at point $\eta$. Let $\phi:\mathbb R^d\times\mathbb R^d\to\mathbb R$ be a potential of pair interaction, and let $\mu$ be a corresponding  Gibbs perturbation of (the distribution of) a completely random measure on $\mathbb R^d$.
In particular, $\mu$ is a probability measure on $\mathbb K(\mathbb R^d)$ such that the set of atoms of a  discrete measure $\eta\in\mathbb K(\mathbb R^d)$ is $\mu$-a.s.\  dense in  $\mathbb R^d$. We consider the corresponding
Dirichlet form
$$
\mathscr E^{\mathbb K}(F,G)=\int_{\mathbb K(\mathbb R^d)}\langle\nabla^{\mathbb K} F(\eta), \nabla^{\mathbb K} G(\eta)\rangle_{T_\eta(\mathbb K)}\,d\mu(\eta).
$$
Integrating by parts with respect to the  measure $\mu$,  we explicitly find  the  generator of this Dirichlet form. By using the theory of Dirichlet forms, we prove the main result of the paper: If $d\ge2$,  there exists  a conservative diffusion process on $\mathbb K(\mathbb R^d)$ which is properly associated with the Dirichlet form $\mathscr E^{\mathbb K}$.} \vspace{2mm}

{\bf Keywords:} Completely random measure, diffusion process, discrete Radon measure, Dirichlet form, Gibbs measure\vspace{2mm}

{\bf {MSC:}} 		60J60, 60G57 

\newpage

\section{Introduction}

Let $X$ denote the Euclidean space $\R^d$ and let $\mathscr B(X)$ denote the Borel $\sigma$-algebra on $X$. Let $\mathbb M(X)$ denote the space of all Radon measures on $(X,\mathscr B(X))$. The space $\mathbb M(X)$ is equipped with the vague topology, and let $\mathscr B(\mathbb M(X))$ denote  the corresponding Borel $\sigma$-algebra on it.    A random measure  on $X$ is a
 measurable mapping $\xi:\Omega\to\mathbb M(X)$, where  $(\Omega,\mathscr F,P)$ is a probability space,
see e.g.\ \cite{Kallenberg}. A random measure $\xi$  is called  completely random if, for any mutually
 disjoint sets $A_1,\dots,A_n\in\mathscr B(X)$, the random variables $\xi(A_1),\dots,\xi(A_n)$ are independent \cite{Kingman}.

 The   cone of  discrete Radon measures on $X$ is defined by
$$\mathbb K(X):=\left\{
\eta=\sum_i s_i\delta_{x_i}\in \mathbb M(X) \,\Big|\, s_i>0,\, x_i\in X
\right\}.$$
 Here $\delta_{x_i}$ denotes the Dirac measure with mass at $x_i$.
 In the above representation, the atoms $x_i$ are assumed to be distinct and their total number is at most countable. By convention, the cone $\mathbb K(X)$ contains the null mass $\eta=0$, which is represented by the sum over an empty set of indices $i$.
 As shown in \cite{HKPR}, $\mathbb K(X)\in\mathscr B(\mathbb M(X))$. One endows $\K(X)$ with the vague topology.

 A random measure $\xi$ which takes values in $\K(X)$ with probability one is called a random discrete measure.
It follows from Kingman's result \cite{Kingman} that each completely random measure $\xi$ can be represented as
$\xi=\xi'+\eta$, where $\xi'$ is a deterministic measure on $X$ and $\eta$ is a random discrete measure. An important example of a random discrete measure is the gamma measure  \cite{TsVY}, which has many distinguished properties. It should be noted that, for a wide class of random discrete measures (including the gamma measure), the set of atoms of $\eta=\sum_i s_i\delta_{x_i}$, i.e., $\{x_i\}$, is dense in $X$.

In this paper, we will only use the distribution $\mu$ of a random discrete measure. So, below by a random discrete measure we will always mean  a probability measure $\mu$ on $(\K(X),\mathscr B(\K(X)))$. (Here $\mathscr B(\K(X))$ is the Borel $\sigma$-algebra on  $\K(X)$.)

In \cite{HKPR} Gibbs perturbations of the gamma measure were constructed, and in \cite{Conache} this result was extended to Gibbs perturbations of a general completely random discrete measure.
More precisely, let $\phi:X\times X\to\R$  be a  potential of pair interaction, which
 satisfies the conditions (C1), (C2) below. In particular, it is assumed that the function $\phi$ is symmetric, bounded, has finite range (i.e., $\phi(x,x')=0$ if the distance between $x$ and $x'$ is sufficiently large), and the positive part of $\phi$ dominates, in a sense, its negative part. For  $\eta\in\K(X)$, we heuristically define the energy of $\eta$ (Hamiltonian) by $$ H(\eta):=\frac12\int_{X^2\setminus D}\phi(x,x')\,d\eta(x)\,d\eta(x'),$$
 where $D=\{(x,x')\in X^2\mid x=x'\}$. Let $\nu$ be a completely random discrete measure. The Gibbs perturbation of  $\nu$
corresponding to the potential $\phi$ is heuristically defined as a probability measure $\mu$ on $\K(X)$ given by
$$d\mu(\eta):=\frac1Z\,e^{-H(\eta)}\,d\nu(\eta),$$
where $Z$ is a normalizing factor. A rigorous definition of $\mu$ is given  through the Dobrushin--Lanford--Ruelle equation. It is proven in \cite{HKPR} that such a Gibbs measure exists. In \cite{Conache}, it was shown that such a Gibbs measure is unique, provided the supremum norm of $\phi$, i.e., $\|\phi\|_\infty$, and the first moment of $\nu$ are sufficiently small.  In the general case, the uniqueness problem is still open.

 Any Gibbs measure $\mu$ satisfies the Nguyen--Zessin identity in which the relative energy of interaction between a single atom  measure $\eta=s\delta_x$ and a discrete measure $\eta'\in\K(X)$, with no atom at $x$, is given by $$H(\eta\mid\eta')=s\int_{X}\phi(x,x')\,d\eta'(x').$$ 

In \cite{KLV} (see also \cite{HKLV}), some elements of differential geometry on $\K(X)$ were introduced.
In particular, for a differentiable function $F:\K(X)\to\R$, one defines its gradient $\nabla^\K F $ as a vector field
 which assigns to each $\eta\in \K(X)$ an element of a tangent space $T_\eta(\K(X))$ to $\K(X)$ at point $\eta$. It should be stressed that $\K(X)$ is not a flat space, in the sense that the tangent space $T_\eta(\K)$ changes with a change of $\eta$.

So, in this paper, we consider the Dirichlet form
\begin{equation}\label{hfyrf7}
\mathscr E^\K(F,G):=\int_{\K(\R^d)}\big\la\nabla^\K F(\eta), \nabla^\K G(\eta)\big\ra_{T_\eta(\K)}\,d\mu(\eta).
\end{equation}
This bilinear form is initially defined on an appropriate set of smooth cylinder functions on $\K(X)$.
Using the Nguyen--Zessin identity, we carry out integration by parts with respect to the Gibbs measure $\mu$, and  find the $L^2$-generator of the bilinear form $\mathscr E^\K$ (containing the potential $\phi$ and its gradient). This, in particular, proves the closability of the bilinear form $\mathscr E^\K$ on $L^2(\K(X),\mu)$. This result extends \cite{KLV} (see also \cite{HKLV}), where the $L^2$-generator of $\mathscr E^\K$ (the Laplace operator) was derived in the case of no interaction, $\phi=0$, and when the completely random measure $\mu=\nu$ is  the law of a measure-valued L\'evy process.

The main result of the paper is the existence of a conservative diffusion process on $\K(X)$ which is properly associated with the Dirichlet form $\mathscr E^\K$. For this, one  assumes that the dimension of the underlying space $X$ is $\ge2$. (It is intuitively clear that in the case where the dimension of $X$ is equal to one, such a result should fail.)
We note that this diffusion process  has continuous sample paths in $\K(X)$ with respect to the vague topology.
The diffusion process has $\mu$ as invariant (and even symmetrizing) measure.
To prove the main result, we use the general theory of Dirichlet forms \cite{MR1} as well as the theory of Dirichlet forms over configuration spaces \cite{MR2,RS2}, see also \cite{AKR,KLR}.

The paper is organized as follows. In Section~\ref{fgxtdrst}, we recall how differentiation on $\K(X)$ is introduced  \cite{KLV}, and how the Gibbs measure $\mu$ is constructed \cite{HKPR,Conache}. In Section~\ref{hugfu7tr}, we formulate
the results of the paper. Finally,  Section~\ref{ft67r67} contains the proofs.

\section{Preliminaries} \label{fgxtdrst}

\subsection{Differentiation on $\K(X)$}

In this subsection, we follow \cite{KLV}. A starting point to define  differentiation on $\K(X)$ is the choice of  a natural group $\mathfrak G$ of transformations of $\K(X)$. So let $\operatorname{Diff}_0(X)$ denote the group of $C^\infty$ diffeomorphisms of $X$ which are equal to the identity outside a compact set. Let $C_0(X\to\R_+)$ denote the multiplicative group of continuous functions on $X$ with values in $\R_+:=(0,\infty)$ which are equal to one outside a compact set. The group $\operatorname{Diff}_0(X)$ naturally acts on $X$, hence on $C_0(X\to\Rp)$. So we define a group $\mathfrak G$ by
$$\mathfrak G:=\operatorname{Diff}_0(X) \rightthreetimes C_0(X\to\R_+),$$
the semidirect product of $\operatorname{Diff}_0(X)$ and
$C_0(X\to\Rp)$. As a set, $\mathfrak G$ is equal to the Cartesian product of   $\operatorname{Diff}_0(X)$ and $C_0(X\to\Rp)$, and the product in $\mathfrak G$ is given by
$$g_1g_2=(\psi_1\circ\psi_2,\, \theta_1(\theta_2\circ\psi_1^{-1}))\quad\text{for $g_1=(\psi_1,\theta_1),\, g_2=(\psi_2,\theta_2)\in \mathfrak G$.}$$
 The group $\mathfrak G$  naturally acts on $\K(X)$:  for any $g=(\psi,\theta)\in\mathfrak G$ and any $\eta\in\mathbb K(X)$, we define  $g\eta\in\K(X)$ by
$$
 d(g\eta)(x):=\theta(x)\,d(\psi^*\eta)(x).$$
Here $\psi^*\eta$ is the pushforward of $\eta$ under $\psi$.

The Lie algebra of the Lie group $\operatorname{Diff}_0(X)$ is the space $\operatorname{Vec}_0(X)$ consisting of all smooth vector fields acting from $X$ into $X$  which have compact support. For $v\in \operatorname{Vec}_0(X)$, let $(\psi_t^v)_{t\in\R}$ be the corresponding  one-parameter subgroup of $\operatorname{Diff}_0(X)$,  see e.g.\ \cite{Boo}.
As the Lie algebra of $C_0(X\to\Rp)$ we may take the space $C_0(X)$ of all real-valued continuous functions on $X$ with compact support. For each $h\in C_0(X)$, the corresponding one-parameter subgroup of $C_0(X\to\Rp)$ is given by $(e^{th})_{t\in\R}$.
Thus, $\mathfrak g:= \operatorname{Vec}_0(X)\times C_0(X) $ can be thought of as a Lie algebra that corresponds to the Lie group $\mathfrak G$. For an arbitrary $(v,h)\in\mathfrak g$, we may consider the curve $\{(\psi_t^v,e^{th}),\,t\in\R\}$ in $\mathfrak G$. For a  function $F:\K(X)\to\R$ we define its derivative in direction $(v,h)$ by
$$\nabla^{\K}_{(v,h)}F(\eta):=\frac{d}{dt}\Big|_{t=0}F((\psi_t^v,e^{th})\eta),\quad \eta\in\K(X),$$
provided the derivative on the right hand side of this formula exists.

A tangent space to $\K(X)$ at $\eta\in\K(X)$ is defined by
\begin{equation}\label{hjfuy8y}
T_\eta(\K(X)):=L^2(X\to X\times\R,\eta),\end{equation}
the $L^2$-space of $X\times\R$-valued vector fields  on $X$ which are square integrable with respect to the measure $\eta$.
We then define a gradient of a differentiable function $F:\K(X)\to\R$ at $\eta$ as the  element $(\nabla^{\K}F)(\eta)$ of $T_\eta(\K)$ which satisfies
$$\nabla^{\K}_{(v,h)}F(\eta)=\langle \nabla^{\K}F(\eta),(v,h)\rangle_{T_\eta(\K)}\quad\text{for all }(v,h)\in\mathfrak g.$$

\begin{remark}
Note that, in  the above definitions, one could replace $\K(X)$ with the wider space $\mathbb M(X)$. This is why, in paper \cite{KLV}, the gradient $\nabla^\K$ was actually denoted by $\nabla^{\mathbb M}$.
\end{remark}

Let us now define a set of test functions on $\K(X)$.  Let us denote by $\tau(\eta)$ the set of atoms of $\eta$, and for each $x\in\tau(\eta)$, let $s_x:=\eta(\{x\})$. Thus, we have
$$\eta=\sum_{x\in\tau(\eta)}s_x\delta_x.$$

We define a metric on $\Rp$ by
$$ d_{\Rp}(s_1,s_2):=\left|\log(s_1)-\log(s_2)\right|,\quad s_1,s_2\in\Rp.$$
Then $\Rp$ becomes a locally compact Polish space, and any set of the form $[a,b]$, with $0<a<b<\infty$, is compact.
We denote $\hat X:=\Rp\times X$, and let $C_0^\infty(\hat X)$ denote the space of all smooth functions on $\hat X$ with compact support.
 For each $\varphi\in C_0^\infty(\hat X)$ and $\eta\in\K(X)$, we define
$$\langle\!\langle \varphi,\eta\rangle\!\rangle:=\sum_{x\in\tau(\eta)}\varphi(s_x,x).$$
Note that the latter sum contains only finitely many nonzero terms.

We denote by $\FCK$ the set of all  functions $F:\K(X)\to\R$ of the form
\begin{equation}\label{hufutr}
 F(\eta)=g\big(\langle\!\langle \varphi_1,\eta\rangle\!\rangle,\dots,\langle\!\langle \varphi_N,\eta\rangle\!\rangle\big),\quad \eta\in\K(X),\end{equation}
where $g\in C_{b}^\infty(\R^N)$, $\varphi_1\,\dots,\varphi_N\in C_0^\infty(\hat X)$, and $N\in\N$.  Here $C_{b}^\infty(\R^N)$ is the set of all infinitely differentiable functions on $\R^N$ which, together with all their derivatives, are bounded.

Let $F:\K(X)\to\R$, $\eta\in\K(X)$, and $x\in\tau(\eta)$. We define
\begin{align}
\nabla_xF(\eta):=&\nabla_y\big|_{y=x}F(\eta-s_x\delta_x+s_x\delta_y),\label{gfuyfr}\\
\nabla_{s_x}F(\eta):=&\frac{d}{du}\Big|_{u=s_x}F(\eta-s_x\delta_x+u\delta_x),\label{vyd6trde}
\end{align}
provided the derivatives  exist. Here the variable $y$ is from $X$,  $\nabla_y$ denotes the  gradient on $X$ in the $y$ variable, and
the variable $u$ is from $\Rp$.

An easy calculation shows that, for each function $F\in\FCK$, the gradient $\nabla^\K F$ exists and is given by
\begin{equation}\label{guft}
 (\nabla^{\K}F)(\eta,x)=\left(\frac1{s_x}\nabla_x F(\eta),\nabla_{s_x}F(\eta)\right),\quad \eta\in\K(X),\  x\in\tau(\eta).\end{equation}

\subsection{The Gibbs measures}

We start with defining a class of completely random measures. Let $l:\hat X\to\Rp$ be a measurable function which satisfies the following conditions: for $dx$-a.a.\ $x\in X$
\begin{equation}\label{hfjiu}
\int_{\Rp}\frac{l(s,x)}{s}\,ds=\infty\end{equation}
and for each $\Lambda\in\mathscr B_0(X)$,
\begin{equation}\label{jkufr7u}
\int_{\Rp\times\Lambda}l(s,x)\,ds\,dx<\infty.\end{equation}
Here $\mathscr B_0(X)$ denotes the collection of all sets from $\mathscr B(X)$ which have compact closure.

We define a measure $\sigma$ on $\hat X$ by
\begin{equation}\label{gfytre}
 d\sigma(s,x):=\frac{l(s,x)}{s}\,ds\,dx.
\end{equation}
 Since \eqref{jkufr7u} holds, we may define a completely random measure $\nu$ as a probability measure on $\K(X)$ which has Fourier transform
$$
 \int_{\K(X)}e^{i\langle f,\eta\rangle}\,d\nu(\eta)=\exp\left[\int_{\hat X} (e^{isf(x)}-1)\,d\sigma(s,x)\right],\quad f\in C_0(X),$$
see e.g.\ \cite{DVJ2}.
Here we denote $\langle f,\eta\rangle:=\int_X f(x)\,d\eta(x)$. The measure $\nu$ can also be characterized through the Mecke identity: $\nu$ is the unique probability measure on $\K(X)$ which satisfies, for each measurable function $F:\hat X\times\K(X)\to[0,\infty]$,
\begin{equation}\label{Mecke}
\int_{\K(X)}\sum_{x\in\tau(\eta)}F(s_x,x,\eta)\,d\nu(\eta)=\int_{\K(X)}d\nu(\eta)\int_{\hat X}d\sigma(s,x)\,F(s,x,\eta+s\delta_x).
\end{equation}

For example, by choosing $l(s,x)=e^{-s}$, we get the gamma measure $\nu$ \cite{TsVY}. More generally, we may fix measurable functions $\alpha,\beta:X\to\Rp$ and set
$$l(s,x)=\beta(x)e^{-s/\alpha(x)}.$$
Then conditions \eqref{hfjiu}, \eqref{jkufr7u} are satisfied when $\alpha(x)\beta(x)\in L^1_{\mathrm{loc}}(X,dx)$.

Let us now recall the definition of a Gibbs measure from \cite{HKPR,Conache}.
Additionally to \eqref{hfjiu} and \eqref{jkufr7u}, we assume that, for each $\Lambda\in\mathscr B_0(X)$,
\begin{equation}\label{khgiygt}
\int_{\Rp\times\Lambda}l(s,x)s\,ds\,dx<\infty.\end{equation}

Let $\phi: X\times X\to\R$ be a pair potential which satisfies the following two conditions:

\begin{itemize}

\item[(C1)] $\phi$ is a symmetric, bounded, measurable function which satisfies, for some $R>0$,
$$\phi(x,y)=0\quad\text{if }|x-y|>R.$$

\item[(C2)] There exists $\delta>0$ such that
$$\inf_{x,y\in X:\,\,
|x-y|\le\delta} \phi(x,y)>\varepsilon \|\phi^-\|_\infty.$$
Here
$$\|\phi^-\|_\infty:=\sup_{x,y\in X}(-\phi(x,y)\vee 0)$$
 and
$\varepsilon:= 2v_d\, d^{d/2}(R/\delta+1)$, where $v_d:=\pi^{d/2}/\Gamma(d/2+1)$ is the volume of a unit ball in $X$.
\end{itemize}

\begin{remark}
Note that condition (C2) excludes the  potential $\phi=0$. Note also that
  conditions (C1) and (C2) are trivially satisfied if $\phi(x,y)=\psi(x-y)$, where $\psi\in C_0(X)$, $\psi(x)=\psi(-x)$, and $\psi(0)>v_d\, d^{d/2}\|\psi^-\|_\infty$.\end{remark}

For any $\eta,\xi\in\K(X)$ and $\Lambda\in\mathscr B_0(X)$, we define the relative energy (Hamiltonian)
$$ H_\Lambda(\eta\mid\xi):=\frac12\int_{\Lambda^2\setminus D}\phi(x,y)\,d\eta(x)\,d\eta(y)+\int_{\Lambda^c}\int_\Lambda
\phi(x,y)\,d\eta(x)\,d\xi(y),$$
where $\Lambda^c:=X\setminus\Lambda$. Note that $H_\Lambda(\eta\mid\xi)$ is well defined and finite.

For each $\Lambda\in\mathscr B(X)$, we denote $\K(\Lambda):=\{\eta\in\K(X)\mid \tau(\eta)\subset\Lambda\}$. Note that $\K(\Lambda)\in\mathscr B(\K(X))$. Let $\nu_\Lambda$ denote the pushforward of the completely random measure $\nu$ under the canonical projection
$$\K(X)\ni\eta\mapsto \eta_\Lambda:=\sum_{x\in\tau(\eta)\cap\Lambda}s_x\delta_x\in\K(\Lambda).$$
The measure $\nu_\Lambda$ has Fourier transform
$$  \int_{\K(\Lambda)}e^{i\langle f,\eta\rangle}\,d\nu_\Lambda(\eta)=
\exp\left[\int_{\Rp\times\Lambda} (e^{isf(x)}-1)\,d\sigma(s,x)\right],\quad f\in C_0(X).$$

\begin{proposition}[\cite{HKPR,Conache}]\label{hguf7tuo}
Let \eqref{hfjiu}--\eqref{gfytre}, \eqref{khgiygt} hold and let conditions {\rm (C1)} and {\rm (C2)} be satisfied. Then, for any $\Lambda\in\mathscr B_0(X)$ and $\xi\in\K(X)$,
$$0<Z_\Lambda(\xi):=\int_{\K(\Lambda)}e^{-H(\eta\,|\,\xi)}\,d\nu_\Lambda(\eta)<\infty.$$
\end{proposition}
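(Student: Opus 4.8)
The plan is to establish the two inequalities separately, the positivity being routine and the finiteness being the substantive part. For the lower bound I would use that $H_\Lambda(\eta\mid\xi)$ is finite for every $\eta\in\K(\Lambda)$ (as already noted when the relative energy was introduced), so that the integrand $e^{-H_\Lambda(\eta\mid\xi)}$ is strictly positive everywhere. To make this quantitative, write $\Lambda_R:=\{y\in X:\operatorname{dist}(y,\Lambda)\le R\}$ and restrict the integral to the set $\{\eta(\Lambda_R)\le1\}$. Since the total mass is $\nu_\Lambda$-a.s.\ finite, this set has strictly positive $\nu_\Lambda$-measure, and on it $H_\Lambda(\eta\mid\xi)$ is bounded above (by $\tfrac12\|\phi\|_\infty+\|\phi\|_\infty\,\xi(\Lambda_R)$, using (C1) and that $\xi$ is Radon). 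Hence the integrand is bounded below by a strictly positive constant there, giving $Z_\Lambda(\xi)>0$.

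For the finiteness I would first dispose of the boundary term: because $\phi$ vanishes for $|x-y|>R$ by (C1), the contribution $\int_{\Lambda^c}\int_\Lambda\phi\,d\eta\,d\xi$ only sees $\xi$ on the compact set $\Lambda_R$, where it has finite mass, and is therefore bounded below by $-\|\phi\|_\infty\,\xi(\Lambda_R)\,\eta(\Lambda)$, i.e.\ linearly in $\eta(\Lambda)$. The heart of the matter is a (super)stability bound for the internal energy. I would cover $\Lambda$ by cells $Q_\gamma$ of diameter at most $\delta$ and set $m_\gamma:=\eta(Q_\gamma)$. Splitting the internal energy into within-cell and between-cell contributions, condition (C2) bounds the within-cell pairs from below by $\varepsilon\|\phi^-\|_\infty$ (there $|x-y|\le\delta$), while between cells at distance $\le R$ one uses only $\phi\ge-\|\phi^-\|_\infty$, and beyond $R$ the interaction vanishes. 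The constant $\varepsilon=2v_d d^{d/2}(R/\delta+1)$ is calibrated precisely so that the number of cells within distance $R$ of a fixed cell is absorbed and the positive within-cell term dominates the negative between-cell one, producing an estimate of the form
$$
H_\Lambda(\eta\mid\xi)\ge A\sum_\gamma m_\gamma^2-B\,\eta(\Lambda)-C,
$$
with $A>0$ and $B,C$ depending only on $\Lambda$ and $\xi$.

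Granted such an estimate, finiteness follows from the completely random structure. As the cells are pairwise disjoint, the masses $(m_\gamma)$ are independent under $\nu_\Lambda$; using $-Am_\gamma^2+Bm_\gamma\le B^2/(4A)$ and factorising,
$$
Z_\Lambda(\xi)\le e^{C}\prod_\gamma\int_{\K(Q_\gamma)}e^{-Am_\gamma^2+Bm_\gamma}\,d\nu_{Q_\gamma}\le e^{C}\prod_\gamma e^{B^2/(4A)},
$$
and since $\Lambda$ is bounded only finitely many cells contribute, so the product is finite. The Gaussian factor $e^{-Am_\gamma^2}$ is what renders each cell integral finite irrespective of the tail of the mark distribution, the relevant moments being furnished by \eqref{jkufr7u} and \eqref{khgiygt}. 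I expect the main obstacle to be the stability estimate of the second paragraph, and within it the bookkeeping of the diagonal self-interaction terms $\sum_{x\in\tau(\eta)}s_x^2$ that are excised together with the set $D$: these must be controlled via the second-moment hypothesis \eqref{khgiygt} so that discarding them does not spoil the positivity of the coefficient $A$.
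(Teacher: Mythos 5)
There is nothing in the paper to compare against: Proposition \ref{hguf7tuo} is quoted from \cite{HKPR,Conache} and no proof of it appears in the text, so your argument must stand on its own. Its positivity half is essentially fine: restricting to a bounded-mass event and bounding $H_\Lambda(\cdot\mid\xi)$ from above there is the right idea, and the treatment of the boundary term via (C1) is correct. One small repair: a.s.\ finiteness of $\eta(\Lambda)$ only gives $\nu_\Lambda(\{\eta(\Lambda)\le M\})>0$ for \emph{some} large $M$, not for $M=1$; either take $M$ large (which suffices) or invoke infinite divisibility with zero drift to get mass near $0$. Note also that, granted your superstability bound, the factorisation over cells is superfluous: the pointwise estimate $-Am_\gamma^2+Bm_\gamma\le B^2/(4A)$ already makes $e^{-H_\Lambda}$ bounded, so the entire weight of the finiteness proof rests on that single inequality.

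And that inequality is where the genuine gap lies; it is exactly the diagonal issue you flag in your last sentence, and it cannot be fixed the way you suggest. The bound $H_\Lambda(\eta\mid\xi)\ge A\sum_\gamma m_\gamma^2-B\,\eta(\Lambda)-C$ with $A>0$ is \emph{false} whenever $\|\phi^-\|_\infty>0$. Take $\eta=s\delta_{x_1}+s\delta_{x_2}$ with $\phi(x_1,x_2)=-c<0$; by (C2) and (C1) such a pair necessarily satisfies $\delta<|x_1-x_2|\le R$, so $x_1,x_2$ lie in different cells, each occupied cell contains a single atom, and every within-cell integral $\int_{(Q_\gamma\times Q_\gamma)\setminus D}\phi\,d\eta\,d\eta=m_\gamma^2-\sum_{x\in\tau(\eta)\cap Q_\gamma}s_x^2$ vanishes identically: the excision of the diagonal $D$ wipes out precisely the positive term your argument relies on. Hence $H_\Lambda(\eta\mid\xi)\le-cs^2+O(s)$, contradicting the claimed lower bound $2As^2-O(s)$ for large $s$. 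The deficit $\sum_x s_x^2$ is therefore not a lower-order correction: for one-atom-per-cell configurations it \emph{equals} $\sum_\gamma m_\gamma^2$, so subtracting it annihilates the quadratic term, and no moment hypothesis on $\nu$ can restore a pointwise inequality. Averaging does not help either under the stated assumptions: pushing the deficit into the exponent would require $\int_{\Rp\times\Lambda}(e^{\lambda s^2}-1)\,s^{-1}l(s,x)\,ds\,dx<\infty$ for $\lambda$ of order $\inf_{|x-y|\le\delta}\phi$, i.e.\ Gaussian-type tails of the L\'evy density, whereas \eqref{jkufr7u} and \eqref{khgiygt} provide only two moments, and even the gamma case $l(s,x)=e^{-s}$ fails this; concretely, two atoms of mass about $s$ at an attractive distance have $\nu_\Lambda$-probability of order $e^{-2s}$ (gamma case) while contributing $e^{cs^2-O(s)}$ to the integrand, so your scheme cannot produce $Z_\Lambda(\xi)<\infty$ in this generality. (A further unverified step: the number of diameter-$\delta$ cells within distance $R$ of a fixed cell grows like $(R/\delta)^d$, while $\varepsilon$ grows only linearly in $R/\delta$, so the claim that (C2) ``absorbs'' the neighbour count also needs justification.) In the only remaining case $\|\phi^-\|_\infty=0$ the proposition is immediate, since then $H_\Lambda(\eta\mid\xi)\ge0$ and $Z_\Lambda(\xi)\le1$, and none of your machinery is needed; the substantive case is exactly the one your proof does not cover, and filling it requires input beyond (C1), (C2) and the stated moment conditions, which is what is delegated to \cite{HKPR,Conache}.
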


For each $\Lambda\in\mathscr B_0(X)$ with $\int_\Lambda dx>0$, the local Gibbs state with boundary condition $\xi\in\K(X)$ is defined as a probability measure on $\K(\Lambda)$ given by
$$d\mu_\Lambda(\eta\mid\xi):=\frac1{Z_\Lambda(\xi)}\,e^{-H(\eta\,|\,\xi)}\,d\nu_\Lambda(\eta).$$
For each $B\in\mathscr B(\K(X))$, $\Lambda\in\mathscr B_0(X)$, and $\xi\in\K(X)$, we define
$$B_{\Lambda,\xi}:=\{\eta\in\K(\Lambda)\mid \eta+\xi_{\Lambda^c}\in B\}\in\mathscr B(\K(\Lambda))$$
and hence we can define the local specification $\Pi=\{\pi_{\Lambda}\}_{\Lambda\in\mathscr B_0(X)}$ on $\K(X)$ as the family of stochastic kernels
$$\mathscr{B}(\K(X))\times \K(X)\ni (B,\xi) \mapsto \pi_{\Lambda}(B\mid \xi)\in[0,1]$$
given by $\pi_{\Lambda}(B\mid \xi):=\mu_{\Lambda}(B_{\Lambda,\xi})$.

\begin{definition}
A Gibbs perturbation of a completely random measure $\nu$  corresponding to a pair potential $\phi$  is defined as a probability measure $\mu$ on $(\K(X),\mathscr B(\K(X)))$ which satisfies the following
Dobrushin--Lanford--Ruelle (DLR) equation:
\begin{equation}\label{DLR}
\int_{\K(X)}\pi_\Lambda(B\mid\xi)\,d\mu(\xi)=\mu(B),
\end{equation}
for any $B\in\mathscr B(\K(X))$ and $\Lambda\in\mathscr B_0(X)$. We denote by $G(\nu,\phi)$ the set of all such probability measures $\mu$.
\end{definition}

\begin{theorem}[\cite{HKPR,Conache}]\label{bvufufr7}
Let the conditions of Proposition \ref{hguf7tuo} be satisfied. Then the set $G(\nu,\phi)$ is non-empty. Furthermore, each measure $\mu\in G(\nu,\phi)$ has finite moments: for each $\Lambda\in\mathscr B_0(X)$ and $n\in\N$,
\begin{equation}\label{iyutfr76r}
\int_{\K(X)}\eta(\Lambda)^n\,d\mu(\eta)<\infty.\end{equation}
\end{theorem}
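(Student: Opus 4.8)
The plan is to prove both assertions by the classical Dobrushin--Ruelle compactness method: build finite-volume Gibbs measures, extract an accumulation point, and check that it solves the DLR equation \eqref{DLR}; the finite-moment bound \eqref{iyutfr76r} will then be read off from the very same a priori estimate that drives the compactness. First I would fix the empty boundary condition and an increasing sequence $\Lambda_m\uparrow X$ in $\mathscr B_0(X)$, and set $\mu_m:=\pi_{\Lambda_m}(\,\cdot\mid 0)$, a probability measure on $\K(X)$ concentrated on $\K(\Lambda_m)$. Because $H_\Lambda(\,\cdot\mid\cdot)$ is additive in the usual Gibbsian fashion, the kernels $\{\pi_\Lambda\}$ form a consistent specification, i.e.\ $\int\pi_\Lambda(B\mid\zeta)\,\pi_\Delta(d\zeta\mid\xi)=\pi_\Delta(B\mid\xi)$ for $\Lambda\subset\Delta$; verifying this from the definitions of $H_\Lambda$ and $\nu_\Lambda$ is routine but must be carried out. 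Consistency gives, for every fixed $\Lambda\subset\Lambda_m$, the identity $\int_{\K(X)}\pi_\Lambda(B\mid\xi)\,d\mu_m(\xi)=\mu_m(B)$, so each $\mu_m$ already solves \eqref{DLR} on the scale $\Lambda$, and it then suffices to pass to a limit.

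The heart of the argument is a super-stability estimate in which conditions {\rm (C1)} and {\rm (C2)} enter decisively. Boundedness and finite range {\rm (C1)} control the negative part of the interaction, while {\rm (C2)}, through the geometric constant $\varepsilon=2v_d\,d^{d/2}(R/\delta+1)$, produces a lower bound of Ruelle type: covering $X$ by cells of diameter $\le\delta$ and using $\phi\ge\inf_{|x-y|\le\delta}\phi$ on each cell, the strong positive self-interaction of atoms sharing a cell dominates the finite-range negative interaction between neighbouring cells, yielding
\[
H_\Lambda(\eta\mid\xi)\ge A\,\eta(\Lambda)^2-B\,\eta(\Lambda)-c(\xi),
\]
with $A>0$ depending on $\Lambda$ and $c(\xi)$ a boundary cost. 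Crucially, this makes the Gibbs weight decay like $e^{-A\eta(\Lambda)^2}$, which dominates every polynomial $\eta(\Lambda)^n$; this is precisely why all moments are finite even though, by \eqref{khgiygt}, only the second moment of the reference measure $\nu$ is under control (the remaining local moments of $\nu$ being handled through its L\'evy--Khintchine, equivalently Mecke \eqref{Mecke}, structure). For the empty boundary $c(0)=0$, combining the above with the partition-function bound of Proposition~\ref{hguf7tuo} gives $\sup_m\int_{\K(X)}\eta(\Lambda)^n\,d\mu_m(\eta)<\infty$ for every $\Lambda\in\mathscr B_0(X)$ and $n\in\N$.

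With the uniform first-moment bound in hand, Markov's inequality together with Kallenberg's tightness criterion for random measures yields relative compactness of $\{\mu_m\}$ in the vague topology, so by Prokhorov a subsequence converges weakly to a probability measure $\mu$. To see $\mu\in G(\nu,\phi)$, I would pass to the limit in the displayed DLR identity: the right-hand side converges on continuity sets, and the left-hand side converges once one establishes the Feller property of $\xi\mapsto\pi_\Lambda(B\mid\xi)$, which holds because, by finite range, this kernel depends on $\xi$ only through its restriction to an $R$-neighbourhood of $\Lambda$ and enters the density continuously. Since each $\pi_\Lambda$ is supported on $\K(\Lambda)$, the limit $\mu$ is automatically concentrated on $\K(X)$, proving $G(\nu,\phi)\ne\varnothing$.

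Finally, for an arbitrary $\mu\in G(\nu,\phi)$ the bound \eqref{iyutfr76r} follows by inserting \eqref{DLR} on an enlarged region $\Delta$ containing the $R$-neighbourhood of $\Lambda$: by finite range $\eta_\Lambda$ then decouples from the exterior boundary $\xi_{\Delta^c}$, so the super-stability estimate bounds $\int\eta(\Lambda)^n\,\mu_\Delta(d\eta\mid\xi)$ and the boundary-dependent factors $e^{\pm c(\xi)}$ cancel between numerator and partition function, giving a bound on the conditional moment that is uniform (or controlled by already-finite lower moments) in $\xi$; integrating in $\xi$ against $\mu$ completes the proof. I expect the main obstacle to be exactly this last uniformity: establishing the super-stability bound with the correct constant from {\rm (C2)}, and securing the cancellation of the boundary cost so that the conditional-moment estimate does not require an unavailable exponential moment of $\mu$.
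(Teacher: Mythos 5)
First, a point of order: the paper you were given does not prove Theorem~\ref{bvufufr7} at all --- it is imported, with attribution, from \cite{HKPR,Conache} --- so your proposal can only be judged on its own logic and against those references. Judged that way, it fails at the step you yourself call the heart of the argument. The claimed super-stability bound $H_\Lambda(\eta\mid\xi)\ge A\,\eta(\Lambda)^2-B\,\eta(\Lambda)-c(\xi)$ is \emph{false} on the cone $\K(X)$, because the Hamiltonian integrates over $\Lambda^2\setminus D$: atoms carry no self-interaction. Concretely, for a single-atom configuration $\eta=s\delta_x$ with $x\in\Lambda$ one has $H_\Lambda(s\delta_x\mid 0)=0$ for \emph{every} $s>0$, whereas $As^2-Bs\to+\infty$ as $s\to\infty$; so no such bound can hold for any $A>0$. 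Your covering argument does give, for a cell $Q$ of diameter $\le\delta$, a within-cell energy at least $\phi_*\big(\eta(Q)^2-\sum_{x\in\tau(\eta)\cap Q}s_x^2\big)$ with $\phi_*:=\inf_{|x-y|\le\delta}\phi(x,y)>0$, but the subtracted diagonal term $\sum_{x}s_x^2$ is of the same order as $\eta(Q)^2$ exactly when the mass concentrates on few atoms --- and that is the dangerous regime here, since the weights $s_x$ are unbounded under $\nu$. Hence the Gibbs factor has no $e^{-A\eta(\Lambda)^2}$ decay, and with it both your uniform moment bound (hence tightness) and your derivation of \eqref{iyutfr76r} collapse. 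This is the crux of the theorem, not a technicality: under the standing hypotheses \eqref{jkufr7u}, \eqref{khgiygt} the reference measure $\nu$ may have infinite local moments of order $\ge3$ (e.g.\ $l(s,x)=(1+s)^{-3}$), so all higher moments of $\mu$ must be \emph{created} by the interaction; and since a heavy atom has zero self-energy, the only available suppression of a large weight $s$ at $x$ is its relative energy $s\int_X\phi(x,x')\,d\eta(x')$ against the \emph{other} atoms, which can be positive only thanks to the presence of other atoms near $x$ (guaranteed $\mu$-a.s.\ by \eqref{hfjiu}) together with the domination condition (C2). Converting this probabilistic mechanism --- as opposed to a pointwise energy lower bound --- into moment estimates uniform in the volume and in the boundary condition is precisely the hard technical content of \cite{HKPR}, and nothing in your proposal replaces it.

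There is a second, independent gap in the compactness step. Vague relative compactness of $\{\mu_m\}$ produces a limit point $\mu$ as a law on $\mathbb M(X)$, but $\K(X)$ is a Borel, \emph{not} vaguely closed, subset of $\mathbb M(X)$: vague limits of discrete measures need not be discrete, so ``the limit is automatically concentrated on $\K(X)$'' does not follow from the supports of the $\pi_{\Lambda_m}$. Similarly, $\xi\mapsto\pi_\Lambda(B\mid\xi)$ is not vaguely continuous for general Borel $B$, and even the boundary term $\int_{\Lambda^c}\int_\Lambda\phi(x,y)\,d\eta(x)\,d\xi(y)$ is not a vaguely continuous functional of $\xi$ (the cutoff $\chi_{\Lambda^c}$ is discontinuous, and only local finiteness of the mass of $\xi$ is available); so passing to the limit in the DLR identity requires a genuine argument, not an appeal to continuity sets. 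These difficulties are among the reasons the construction in \cite{HKPR} is carried out on the marked configuration space $\Gamma_{pf}(\hat X)$ (cf.\ Proposition~\ref{ghig}) with estimates tailored to the cone, rather than by a direct Prokhorov argument on $\mathbb M(X)$.
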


Since \eqref{hfjiu} holds, for each $\Lambda\in\mathscr B_0(X)$ with $\int_\Lambda dx>0$, for $\nu$-a.a.\ $\eta\in\K(X)$, the set $\tau(\eta)\cap\Lambda$ is infinite. Using the DLR equation, we therefore obtain the following result.

\begin{proposition} Let the conditions of Proposition \ref{hguf7tuo} be satisfied, and let $\mu\in G(\nu,\phi)$. Let $\Lambda\in\mathscr B_0(X)$ with $\int_\Lambda dx>0$. Then, for $\mu$-a.a.\ $\eta\in\K(X)$, the set $\tau(\eta)\cap\Lambda$ is infinite.  In particular, the set $\tau(\eta)$ is $\mu$-a.s.\ dense in $X$.
\end{proposition}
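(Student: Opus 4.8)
The plan is to deduce the statement for $\mu$ from the corresponding statement for $\nu$ (recalled immediately before the proposition) by exploiting the DLR equation \eqref{DLR}, and then to obtain density by a standard countable-base argument.

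First I would fix $\Lambda\in\mathscr B_0(X)$ with $\int_\Lambda dx>0$ and introduce the event
\[
B:=\{\eta\in\K(X)\mid \tau(\eta)\cap\Lambda\text{ is infinite}\}\in\mathscr B(\K(X)),
\]
with the goal of showing $\mu(B)=1$. The crucial observation is that membership in $B$ depends only on the restriction of $\eta$ to $\Lambda$. Indeed, for $\eta\in\K(\Lambda)$ and any $\xi\in\K(X)$ one has $\tau(\eta+\xi_{\Lambda^c})\cap\Lambda=\tau(\eta)$, since $\tau(\xi_{\Lambda^c})\subset\Lambda^c$. Hence the slice $B_{\Lambda,\xi}=\{\eta\in\K(\Lambda)\mid \eta+\xi_{\Lambda^c}\in B\}$ equals $\{\eta\in\K(\Lambda)\mid \tau(\eta)\text{ is infinite}\}$, independently of the boundary condition $\xi$.

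Next I would evaluate the specification kernel. By definition $\pi_\Lambda(B\mid\xi)=\mu_\Lambda(B_{\Lambda,\xi}\mid\xi)$, and the local Gibbs state $\mu_\Lambda(\cdot\mid\xi)$ is absolutely continuous with respect to $\nu_\Lambda$, with density $Z_\Lambda(\xi)^{-1}e^{-H(\eta\,|\,\xi)}$, which is finite and strictly positive by Proposition~\ref{hguf7tuo}. The statement recalled before the proposition says precisely that $\tau(\eta)\cap\Lambda$ is infinite for $\nu$-a.a.\ $\eta$; passing to the pushforward $\nu_\Lambda$, this gives $\nu_\Lambda(\{\eta\in\K(\Lambda)\mid\tau(\eta)\text{ finite}\})=0$. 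By absolute continuity the same set is $\mu_\Lambda(\cdot\mid\xi)$-null, whence $\pi_\Lambda(B\mid\xi)=1$ for every $\xi\in\K(X)$. Plugging this into the DLR equation \eqref{DLR} yields $\mu(B)=\int_{\K(X)}\pi_\Lambda(B\mid\xi)\,d\mu(\xi)=1$, which is the first assertion.

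For the density statement I would choose a countable base $\{\Lambda_n\}_{n\in\N}$ of the topology of $X$ consisting of open balls with rational centres and radii; each such $\Lambda_n$ lies in $\mathscr B_0(X)$ and has $\int_{\Lambda_n}dx>0$. Applying the first assertion to each $\Lambda_n$ and intersecting the resulting $\mu$-full sets (a countable intersection, hence still $\mu$-full) shows that, $\mu$-a.s., $\tau(\eta)$ meets every $\Lambda_n$, i.e.\ $\tau(\eta)$ is dense in $X$. I do not expect a serious obstacle here: the whole argument is a transfer of a $\nu$-a.s.\ property to $\mu$ through the absolute continuity of the finite-volume conditional distributions. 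The only point requiring a little care is the identification of the slice $B_{\Lambda,\xi}$ as $\xi$-independent, which hinges on the fact that the boundary configuration $\xi_{\Lambda^c}$ contributes no atoms inside $\Lambda$.
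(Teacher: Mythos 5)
Your proof is correct and takes essentially the same approach as the paper, which states this proposition without a detailed proof: it notes only that the $\nu$-a.s.\ infiniteness of $\tau(\eta)\cap\Lambda$ (a consequence of \eqref{hfjiu}) together with the DLR equation yields the result, and your argument---the $\xi$-independence of the slice $B_{\Lambda,\xi}$, the absolute continuity of $\mu_\Lambda(\cdot\mid\xi)$ with respect to $\nu_\Lambda$ giving $\pi_\Lambda(B\mid\xi)=1$, and integration via \eqref{DLR}---is precisely the verification the paper leaves to the reader. The concluding countable-base argument for the density assertion is the standard completion and is also fine.
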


By analogy with \cite{NZ}, the Gibbs measures have the following property.

\begin{theorem}\label{hjugfuyt8} Let the conditions of Proposition \ref{hguf7tuo} be satisfied, and let $\mu\in G(\nu,\phi)$.
Then $\mu$ satisfies the following  Nguyen--Zessin identity: for each measurable function $F:\hat X\times\K(X)\to[0,\infty]$,
\begin{multline}
\int_{\K(X)}\sum_{x\in\tau(\eta)}F(s_x,x,\eta)\,d\mu(\eta)\\
=\int_{\K(X)}\int_{\hat X}\exp\left[-s\int_{X}\phi(x,x')\,d\eta(x')\right]F(s,x,\eta+s\delta_x)\,d\sigma(s,x)d\mu(\eta).\label{ghydcydr}\end{multline}
\end{theorem}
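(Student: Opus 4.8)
The plan is to derive the identity from two ingredients already at our disposal: the Mecke identity \eqref{Mecke} for the underlying completely random measure (applied to the local measures $\nu_\Lambda$), which serves as the integration-by-parts formula for the reference measure, and the DLR equation \eqref{DLR}, which presents $\mu$ as a mixture of the local Gibbs states $\mu_\Lambda(\cdot\mid\xi)$. Since $F\ge0$, every manipulation may be carried out in $[0,\infty]$ by Tonelli's theorem and monotone convergence, so no \emph{a priori} finiteness is required.

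First I would establish a local version of the identity. Fix $\Lambda\in\mathscr B_0(X)$ and a nonnegative measurable $\tilde F$, and apply the Mecke identity for $\nu_\Lambda$ (i.e.\ \eqref{Mecke} with $\sigma$ restricted to $\Rp\times\Lambda$) to the summand $\tilde F(s_x,x,\eta)\,e^{-H_\Lambda(\eta\mid\xi)}$, carrying the Gibbs density along as part of the integrand. The key computation is the energy increment produced by inserting one atom: for $x\in\Lambda$,
\[
H_\Lambda(\eta+s\delta_x\mid\xi)-H_\Lambda(\eta\mid\xi)=s\int_X\phi(x,x')\,d(\eta+\xi_{\Lambda^c})(x').
\]
This follows by expanding the two terms in the definition of $H_\Lambda$: the self-interaction at $x$ is excluded by the removal of the diagonal $D$, the factor $\tfrac12$ is cancelled by the two equal cross terms arising from the symmetry of $\phi$, and the boundary term supplies exactly the integral against $\xi_{\Lambda^c}$. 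By (C1) the right-hand side equals $s\int_{B(x,R)}\phi\,d(\eta+\xi_{\Lambda^c})$ and is finite since $\eta+\xi_{\Lambda^c}$ is Radon. Dividing by $Z_\Lambda(\xi)$, positive and finite by Proposition~\ref{hguf7tuo}, converts the Mecke identity into the local Nguyen--Zessin identity
\[
\int_{\K(\Lambda)}\sum_{x\in\tau(\eta)}\tilde F(s_x,x,\eta)\,d\mu_\Lambda(\eta\mid\xi)=\int_{\K(\Lambda)}d\mu_\Lambda(\eta\mid\xi)\int_{\Rp\times\Lambda}e^{-s\int_X\phi(x,x')\,d(\eta+\xi_{\Lambda^c})(x')}\,\tilde F(s,x,\eta+s\delta_x)\,d\sigma(s,x).
\]

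Next I would globalize by applying the DLR equation \eqref{DLR} twice. It suffices to treat $F$ vanishing for $x\notin\Lambda$, the general case following by monotone convergence as $\Lambda\uparrow X$. For such $F$ the sum ranges only over $\tau(\eta)\cap\Lambda$; writing $\mu$ through the integral form of its DLR decomposition and substituting $\eta\mapsto\eta+\xi_{\Lambda^c}$, the left-hand side becomes the $\mu(d\xi)$-average of the local sum with $\tilde F(s,x,\eta):=F(s,x,\eta+\xi_{\Lambda^c})$. Applying the local identity and setting $\zeta:=\eta+\xi_{\Lambda^c}$, the inner expression turns into $\int_{\K(\Lambda)}\Psi(\eta+\xi_{\Lambda^c})\,d\mu_\Lambda(\eta\mid\xi)$, where $\Psi(\zeta):=\int_{\Rp\times\Lambda}e^{-s\int_X\phi(x,x')\,d\zeta(x')}F(s,x,\zeta+s\delta_x)\,d\sigma(s,x)$. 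Recognizing this as exactly the DLR decomposition of $\int_{\K(X)}\Psi\,d\mu$ and re-assembling via \eqref{DLR} yields the claim, the $x$-integral extending from $\Rp\times\Lambda$ to $\hat X$ at no cost because $F$ is supported in $\Lambda$.

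I expect the main obstacle to be the globalization rather than the local computation: one must verify that the boundary contribution in the energy increment recombines, after the two uses of the DLR equation, into the single integral $s\int_X\phi(x,x')\,d\eta(x')$ against the \emph{full} configuration, and that the substitutions $\eta\mapsto\eta+\xi_{\Lambda^c}$ and $\zeta\mapsto\zeta+s\delta_x$ are propagated consistently through both applications of \eqref{DLR}. The finite range of $\phi$ from (C1) is precisely what guarantees this limiting exponent is finite even though the atoms of $\eta$ are $\mu$-a.s.\ dense in $X$.
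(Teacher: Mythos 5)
Your proof is correct and is essentially the paper's own argument: the paper's proof is exactly the chain DLR equation $\to$ Mecke identity for $\nu_\Lambda$ $\to$ energy-increment computation $\to$ DLR equation again, which is precisely what your ``local Nguyen--Zessin identity plus globalization'' amounts to. The only differences are minor bookkeeping: the paper first reduces to product functions $F(s,x,\eta)=f(s,x)g(\eta_\Lambda)$ via the monotone-class argument of \cite[Theorem~6.3]{HKPR}, whereas you keep a general nonnegative $F$ supported in $\Rp\times\Lambda$ and absorb $\xi_{\Lambda^c}$ into $\tilde F$ (which is fine, and arguably more self-contained); and your increment formula $H_\Lambda(\eta+s\delta_x\mid\xi)-H_\Lambda(\eta\mid\xi)=s\int_X\phi(x,x')\,d(\eta+\xi_{\Lambda^c})(x')$ holds as written only when $x\notin\tau(\eta)$ (the diagonal in $H_\Lambda$ also removes the interaction of $s\delta_x$ with an existing atom of $\eta$ at $x$), which is harmless because for each fixed $\eta$ the set of such $(s,x)$ is $\sigma$-null --- exactly the closing remark in the paper's proof.
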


\begin{proof} 
By the same arguments as in the proof of \cite[Theorem~6.3]{HKPR}, it is enough to show that, for each $\Lambda\in \mathscr B_0(X)$, equality 
(\ref{ghydcydr}) holds for all functions $F$ of the form $F(s,x,\eta)=f(s,x)g(\eta_{\Lambda})$, where $f\in C_{0}(\hat{X})$, $f\ge0$, the support of $f$ is a subset of  $\mathbb{R}_{+}\times \Lambda$ and $g:\K(\Lambda)\to[0,\infty)$ is bounded and measurable. 
By the DLR equation (\ref{DLR}) and the Mecke identity (\ref{Mecke}), we have
\begin{align}&
\int_{\K(X)}\sum_{x\in\tau(\eta)}F(s_x,x,\eta)\,d\mu(\eta) = \int_{\K(X)}\int_{\K(X)}\sum_{x\in\tau(\eta)\cap\Lambda}f(s_x,x)g(\eta)\, \pi_{\Lambda}(d\eta\mid\xi)\, d\mu(\xi)\notag\\
&=\int_{\K(X)}\int_{\K(\Lambda)}\sum_{x\in\tau(\eta)}f(s_x,x)g(\eta)\, \frac{1}{Z_{\Lambda}(\xi)} e^{-H_{\Lambda}(\eta\mid \xi_{\Lambda^c})}d\nu_{\Lambda}(\eta)\, d\mu(\xi)\notag\\
&=\int_{\K(X)}\int_{\K(\Lambda)}\int_{\Rp\times\Lambda}f(s,x)g(\eta+s\delta_{x})\, \frac{1}{Z_{\Lambda}(\xi)} e^{-H_{\Lambda}(\eta+s\delta_{x}\vert \xi_{\Lambda^c})}\, d\sigma(s,x)d\nu_{\Lambda}(\eta)\, d\mu(\xi)\notag\\
&=\int_{\hat X}\int_{\K(X)}\int_{\K(X)}F(s,x,\eta+s\delta_{x}) \exp\left[-s\int_{X\setminus\{x\}}\phi(x,x')\,d\eta(x')\right] \,  \pi_{\Lambda}(d\eta\mid\xi)\, d\mu(\xi)\,d\sigma(s,x)\notag\\
&=\int_{\K(X)}\int_{\hat X}\exp\left[-s\int_{X\setminus\{x\}}\phi(x,x')\,d\eta(x')\right]F(s,x,\eta+s\delta_x) \,d\sigma(s,x) d\mu(\eta),\label{utfty7roy}
\end{align}
where the last line is obtained by applying the DLR equation (\ref{DLR}) again. Note that, for a fixed $\eta\in\K(X)$, since the set $\tau(\eta)$ is countable, we have $\sigma(\tau(\eta)\times\R_+)=0$. Hence, in formula \eqref{utfty7roy}, instead of the integral $\int_{X\setminus\{x\}}\phi(x,x')\,d\eta(x')$, we may write $\int_{X}\phi(x,x')\,d\eta(x')$.
\end{proof}

\section{The results}\label{hugfu7tr}

In this section, we will introduce the Dirichlet form $\mathscr E^\K$ and formulate the results. We postpone the proofs to Section~\ref{ft67r67}.

Let the conditions of Proposition \ref{hguf7tuo} be satisfied and let us fix any Gibbs measure $\mu\in G(\nu,\phi)$. For any $F,G\in\FCK$, we define $\mathscr E^{\K}(F,G)$ by formula \eqref{hfyrf7}. Note that, by \eqref{guft} and \eqref{iyutfr76r}, we indeed have
$$ \int_{\K(X)}\big|\la\nabla^\K F(\eta), \nabla^\K G(\eta)\big\ra_{T_\eta(\K)}\big|\,d\mu(\eta)<\infty. $$

\begin{lemma}\label{bhuyft7}
Let $F,G\in\FCK$ and let $F=0$ $\mu$-a.e. Then $\mathscr E^{\K}(F,G)=0$.
\end{lemma}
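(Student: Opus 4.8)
The plan is to prove the stronger statement that the gradient $\nabla^{\K}F$ vanishes $\mu$-a.e., from which $\mathscr E^{\K}(F,G)=0$ follows immediately, since the integrand in \eqref{hfyrf7} is then $0$ $\mu$-a.e. (Equivalently, one may reduce to $\mathscr E^{\K}(F,F)=0$ and invoke the Cauchy--Schwarz inequality together with the finiteness $\mathscr E^{\K}(G,G)<\infty$ guaranteed by \eqref{guft} and \eqref{iyutfr76r}.) In view of the gradient formula \eqref{guft}, it suffices to show that, for $\mu$-a.a.\ $\eta\in\K(X)$, one has $\nabla_xF(\eta)=0$ and $\nabla_{s_x}F(\eta)=0$ for every $x\in\tau(\eta)$.

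First I would transfer the information to added atoms using the Nguyen--Zessin identity \eqref{ghydcydr}. Set $N:=\{\eta\in\K(X):F(\eta)\neq0\}$, so that $\mu(N)=0$. Applying \eqref{ghydcydr} to the nonnegative test function $(s,x,\eta)\mapsto f(s,x)\mathbf 1_N(\eta)$ with $f\in C_0(\hat X)$, $f\ge0$, the left-hand side vanishes because it is an integral over the $\mu$-null set $N$, while the right-hand side equals
$$\int_{\K(X)}\int_{\hat X}\exp\Big[-s\int_X\phi(x,x')\,d\eta(x')\Big]\,f(s,x)\,\mathbf 1_N(\eta+s\delta_x)\,d\sigma(s,x)\,d\mu(\eta).$$
Since the exponential weight is strictly positive ($\phi$ is bounded) and $f\ge0$ is arbitrary, I conclude that $F(\eta+s\delta_x)=0$ for $(\sigma\otimes\mu)$-a.a.\ $(s,x,\eta)$; by Fubini, for $\mu$-a.a.\ $\eta$ the function $G_\eta(u,y):=F(\eta+u\delta_y)$ on $\hat X=\Rp\times X$ vanishes $\sigma$-a.e.

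The main point is then to upgrade this a.e.\ vanishing of $G_\eta$ to the vanishing of its derivatives, which are exactly $\nabla_xF(\eta+s\delta_x)=\nabla_y|_{y=x}G_\eta(s,y)$ and $\nabla_{s_x}F(\eta+s\delta_x)=\partial_u|_{u=s}G_\eta(u,x)$ by \eqref{gfuyfr}, \eqref{vyd6trde} (using that $x\notin\tau(\eta)$ for $(\sigma\otimes\mu)$-a.a.\ $(s,x,\eta)$). Because $F\in\FCK$, the map $G_\eta$ is smooth on $\hat X$: indeed $\langle\!\langle\varphi_i,\eta+u\delta_y\rangle\!\rangle=\langle\!\langle\varphi_i,\eta\rangle\!\rangle+\varphi_i(u,y)$ for $y\notin\tau(\eta)$, so $G_\eta$ is a fixed $C^\infty$ function of the smooth arguments $\varphi_i(u,y)$. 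I would now invoke the elementary real-analysis fact that a differentiable function vanishing on a measurable set $A$ has vanishing gradient at almost every Lebesgue point of $A$, since almost every point of $A$ is a density point and a nonzero derivative would force the function to be nonzero on a subset of positive density. As $\sigma\ll ds\,dx$ by \eqref{gfytre}, the set where $G_\eta$ vanishes has full $\sigma$-measure and carries, off a $\sigma$-null set, only points where both partial derivatives vanish; hence $\nabla_yG_\eta=0$ and $\partial_uG_\eta=0$ hold $\sigma$-a.e. This is the delicate step, and the one I would write out with most care.

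Finally I would feed this back through \eqref{ghydcydr}. Applying the Nguyen--Zessin identity to the nonnegative test function $\Phi(s,x,\eta):=\frac1s|\nabla_xF(\eta)|^2+s|\nabla_{s_x}F(\eta)|^2$, whose summation over $\tau(\eta)$ is precisely $\langle\nabla^{\K}F(\eta),\nabla^{\K}F(\eta)\rangle_{T_\eta(\K)}$ by \eqref{guft}, the right-hand side integrand contains the factors $|\nabla_xF(\eta+s\delta_x)|^2$ and $|\nabla_{s_x}F(\eta+s\delta_x)|^2$, which vanish $(\sigma\otimes\mu)$-a.e.\ by the previous step. Hence the right-hand side is $0$, so $\mathscr E^{\K}(F,F)=\int_{\K(X)}\langle\nabla^{\K}F,\nabla^{\K}F\rangle_{T_\eta(\K)}\,d\mu=0$, which forces $\nabla^{\K}F(\eta)=0$ for $\mu$-a.a.\ $\eta$ and therefore $\mathscr E^{\K}(F,G)=0$. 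Everything outside the density-point argument is a routine application of the Nguyen--Zessin identity, Fubini's theorem, and the moment bound \eqref{iyutfr76r}.
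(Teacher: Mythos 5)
Your proof is correct, and its overall architecture is the same as the paper's: both arguments run through the Nguyen--Zessin identity \eqref{ghydcydr} twice --- once to transfer the hypothesis $F=0$ $\mu$-a.e.\ into the statement $F(\eta+s\delta_x)=0$ for a.a.\ $(s,x,\eta)$, and once (your square-field computation, which is exactly the paper's Lemma~\ref{gfdtst6r} with $F=G$) to express the Dirichlet form in added-atom coordinates, where the vanishing of the derivatives of $(s,x)\mapsto F(\eta+s\delta_x)$ kills the integrand. The genuine difference is the middle step. The paper notes that for fixed $\eta$ the map $(s,x)\mapsto F(\eta+s\delta_x)$ is continuous; since it vanishes on a set of full $ds\,dx$-measure, hence on a dense set, it vanishes \emph{identically}, so all its derivatives vanish trivially. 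You instead run a Lebesgue density-point argument to conclude only that the partial derivatives vanish $\sigma$-a.e.; this is valid (the fact that a function differentiable at a density point of its zero set has zero gradient there is standard, and $\sigma\ll ds\,dx$ suffices to transfer the null sets), but it is heavier than needed: having already observed that $G_\eta(u,y)=F(\eta+u\delta_y)$ is smooth, you could finish by the one-line continuity argument. What your route buys is robustness --- it would survive if $G_\eta$ were merely a.e.\ differentiable rather than continuous --- but that generality is not needed for $F\in\FCK$. Two cosmetic differences: you reduce to $\mathscr E^{\K}(F,F)=0$ and invoke Cauchy--Schwarz, whereas the paper's formula handles general $G$ directly; and your justification that the exponential weight is positive and finite should really cite the finite range in (C1) (or the paper's moment argument via \eqref{iyutfr76r}) in addition to boundedness, since boundedness of $\phi$ alone does not make $\int_X\phi(x,x')\,d\eta(x')$ finite --- though with (C1) and $\eta$ Radon this is immediate.
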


Thus, we may consider $\mathscr E^\K$ as a symmetric bilinear form on $L^2(\K(X),\mu)$ with domain $\FCK$. Note that $\FCK$ is dense in $L^2(\K(X),\mu)$.
Let us now find the $L^2$-generator of this form. Analogously to \eqref{gfuyfr}, \eqref{vyd6trde}, we define, for each function $F\in\FCK$,
$\eta\in\K(X)$, and $x\in\tau(\eta)$,
\begin{align*}
\Delta_xF(\eta):=&\Delta_y\big|_{y=x}F(\eta-s_x\delta_x+s_x\delta_y),\\
\Delta_{s_x}F(\eta):=&\frac{d^2}{du^2}\Big|_{u=s_x}F(\eta-s_x\delta_x+u\delta_x),
\end{align*}
where $\Delta_y$ is the Laplace operator on $X$ acting in the $y$ variable.

The following proposition gives, in particular, the explicit form of the $L^2$-generator of the bilinear form $(\mathscr E^\K,\FCK)$.

\begin{proposition}\label{jig8u} Assume that $l\in C^1(\hat X)$ and $\phi\in C^1(X\times X)$. For each $F\in\nolinebreak \FCK$, we define a function $L^\K F\in L^2(\K(X),\mu)$ by
\begin{align}
L^\K F(\eta)&=\sum_{x\in\tau(\eta)}\bigg[\frac1{s_x}\Delta_x F(\eta)+\frac1{s_x}\la \nabla_x\log l(s,x),\nabla_x F(\eta)\ra_X\notag\\
&\quad -\int_X d(\eta-s_x\delta_x)(x')\la \nabla_x\phi(x,x'),\nabla_x F(\eta)\ra_X\notag\\
&\quad+ s_x\Delta_{s_x}F(\eta)+s_x\big(\nabla_{s_x}\log l(s_x,x)\big)\big(\nabla_{s_x}F(\eta)\big)\notag\\
&\quad-\left(\int_X d(\eta-s_x\delta_x)(x') \phi(x,x')\right) s_x\nabla_{s_x}F(\eta)\bigg].\label{huf7tur}
\end{align}
Here $\la\cdot,\cdot\ra_X$ denotes the scalar product in $X$.
Then, for any $F,G\in\FCK$,
\begin{equation}\label{gyfdy6}
\mathscr E^\K(F,G)=(-L^\K F,G)_{L^2(\K(X),\,\mu)}.\end{equation}
The bilinear form $(\mathscr E^\K,\FCK)$ is closable on $L^2(\K(X),\mu)$, and its closure, denoted by $(\mathscr E^\K,D(\mathscr E^\K))$ is a Dirichlet form. The operator $(-L^\K,\FCK)$ has Fried\-richs' extension, which we denote by $(-L^\K,D(L^\K))$.
\end{proposition}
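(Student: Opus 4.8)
The plan is to prove the representation (\ref{gyfdy6}) by an explicit integration by parts against $\mu$, using the Nguyen--Zessin identity (\ref{ghydcydr}) twice — once to carry the sum over atoms into an integral over $\hat X\times\K(X)$, and once in reverse to carry it back. First I would unfold the definitions: by the tangent-space identification (\ref{hjfuy8y}) and the gradient formula (\ref{guft}), the integrand of (\ref{hfyrf7}) is the $\eta$-integral over $X$ of $\la(\nabla^\K F)(\eta,x),(\nabla^\K G)(\eta,x)\ra_{X\times\R}$, which, since $\eta=\sum_{x\in\tau(\eta)}s_x\delta_x$, equals
\[
\sum_{x\in\tau(\eta)}\Big[\tfrac1{s_x}\la\nabla_x F(\eta),\nabla_x G(\eta)\ra_X+s_x\,\nabla_{s_x}F(\eta)\,\nabla_{s_x}G(\eta)\Big].
\]
The key observation is that, by the very definitions (\ref{gfuyfr})--(\ref{vyd6trde}), each summand depends on $(s_x,x)$ and on the \emph{remainder} $\eta-s_x\delta_x$ only; writing it as $\Phi(s_x,x,\eta-s_x\delta_x)$ and applying (\ref{ghydcydr}) to the function $(s,x,\eta)\mapsto\Phi(s,x,\eta-s\delta_x)$ turns $\mathscr E^\K(F,G)$ into $\int_{\K(X)}\int_{\hat X}e^{-s\int_X\phi(x,x')\,d\eta(x')}\,\Phi(s,x,\eta)\,d\sigma(s,x)\,d\mu(\eta)$, where now $\Phi(s,x,\eta)=\tfrac1s\la\nabla_y|_{y=x}F(\eta+s\delta_y),\nabla_y|_{y=x}G(\eta+s\delta_y)\ra_X+s\,\partial_s F(\eta+s\delta_x)\,\partial_s G(\eta+s\delta_x)$ involves the derivatives in the position and the mass of the added atom.

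Next, I would write $d\sigma(s,x)=\tfrac{l(s,x)}{s}\,ds\,dx$ and integrate by parts separately in $x\in X$ and in $s\in\Rp$. For the spatial part I treat $x\mapsto F(\eta+s\delta_x)$ as an ordinary $C^\infty$ function and apply the divergence theorem on $X=\R^d$; the weight is $e^{-s\int\phi\,d\eta}\,l(s,x)/s^2$, and differentiating its logarithm in $x$ produces the terms $\tfrac1{s}\la\nabla_x\log l,\nabla_x\,\cdot\,\ra_X$ and $-\int_X\la\nabla_x\phi(x,x'),\nabla_x\,\cdot\,\ra_X\,d\eta(x')$. For the mass part I use that, for $F\in\FCK$, the map $s\mapsto F(\eta+s\delta_x)$ has derivative of compact support in the metric $d_{\Rp}$, i.e.\ supported where $s$ is bounded away from $0$ and $\infty$, so the boundary terms at $s=0$ and $s=\infty$ vanish; differentiating the weight $e^{-s\int\phi\,d\eta}\,l(s,x)$ in $s$ produces $\nabla_{s_x}\log l$ and $-\int_X\phi\,d\eta$. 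A second, backward application of (\ref{ghydcydr}) then returns each resulting expression to a sum over $\tau(\eta)$, in which the environment measure becomes $\eta-s_x\delta_x$; collecting the six terms reproduces exactly (\ref{huf7tur}) and yields (\ref{gyfdy6}).

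It remains to justify $L^\K F\in L^2(\K(X),\mu)$ and to deduce the structural claims. For the $L^2$-bound I would use that each $\varphi_i$ has compact support in $\hat X$, so the sum over atoms in (\ref{huf7tur}) is effectively restricted to $x$ in a fixed $\Lambda\in\mathscr B_0(X)$ and $s$ in a compact subset of $\Rp$; together with the boundedness of $g$ and its derivatives, the hypotheses $l\in C^1(\hat X)$ and $\phi\in C^1(X\times X)$, the finite range of $\phi$ (condition (C1)), and the moment bound (\ref{iyutfr76r}), each term is dominated by a polynomial in $\eta(\tilde\Lambda)$ for a fixed $\tilde\Lambda\in\mathscr B_0(X)$, which is $\mu$-square-integrable. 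Once (\ref{gyfdy6}) holds with $L^\K F\in L^2(\mu)$, the operator $(-L^\K,\FCK)$ is symmetric (by symmetry of $\mathscr E^\K$) and non-negative (since $\mathscr E^\K(F,F)\ge0$ is manifest from (\ref{hfyrf7})); by the standard representation theorem for densely defined non-negative symmetric operators \cite{MR1}, the form $(\mathscr E^\K,\FCK)$ is closable, its closure is associated with the Friedrichs extension $(-L^\K,D(L^\K))$, and Lemma~\ref{bhuyft7} ensures the form descends to $L^2(\mu)$. The Markovian (Dirichlet) property of the closure follows from the carr\'e-du-champ structure of (\ref{hfyrf7}): for a smooth normal contraction $\psi$ the chain rule gives $\nabla^\K(\psi\circ F)=\psi'(F)\,\nabla^\K F$ pointwise, whence $\mathscr E^\K(\psi\circ F,\psi\circ F)\le\mathscr E^\K(F,F)$, and approximating a general unit contraction by such $\psi$ and using closedness extends the inequality to all of $D(\mathscr E^\K)$.

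I expect the integration-by-parts step to be the main obstacle. One must pin down the precise dependence of each summand on $(s_x,x,\eta-s_x\delta_x)$ so that the Nguyen--Zessin identity applies cleanly in both directions; one must correctly differentiate the Gibbs weight $e^{-s\int_X\phi(x,x')\,d\eta(x')}$, which is exactly where the potential $\phi$ and its gradient enter the generator; and one must control the boundary terms in the $s$-integration, which is the reason the mass variable carries the logarithmic metric $d_{\Rp}$ (so that compact support forces $s$ away from $0$ and $\infty$) and why the finite-range assumption (C1) together with (\ref{iyutfr76r}) is needed to keep every interaction integral finite and to legitimize the applications of Fubini's theorem.
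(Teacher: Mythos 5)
Your proposal is correct and follows essentially the same route as the paper: the forward application of the Nguyen--Zessin identity is precisely the paper's Lemma~\ref{gfdtst6r}, after which the paper likewise integrates by parts in $x$ and $s$ (using that $(s,x)\mapsto F(\eta+s\delta_x)$ is constant outside a compact subset of $\hat X$), applies \eqref{ghydcydr} in reverse, bounds $|L^\K F(\eta)|$ by $C(\eta(\Lambda)+\eta(\Lambda)^2)$ to invoke \eqref{iyutfr76r}, and deduces closability from the existence of the $L^2$-generator together with the pre-Dirichlet (contraction) property cited from \cite[Chap.~I, Proposition 4.10]{MR1}, whose standard proof you sketch. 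No substantive difference or gap.
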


\begin{remark}
Note that, in the case where $\mu$ is the Gibbs perturbation of the gamma measure, i.e., when $l(s,x)=e^{-s}$, formula \eqref{huf7tur} becomes
\begin{align*}
L^\K F(\eta)&=\sum_{x\in\tau(\eta)}\bigg[\frac1{s_x}\Delta_x F(\eta)-\int_X d(\eta-s_x\delta_x)(x')\la \nabla_x\phi(x,x'),\nabla_x F(\eta)\ra_X\notag\\
&\quad+ s_x\big(\Delta_{s_x}F(\eta)-\nabla_{s_x}F(\eta)\big)
-\left(\int_X d(\eta-s_x\delta_x)(x') \phi(x,x')\right) s_x\nabla_{s_x}F(\eta)\bigg].
\end{align*}
\end{remark}

We are now ready to formulate the main result of the paper.

\begin{theorem}\label{vyfd7yr} Assume that the conditions of Propositions \ref{hguf7tuo} and~\ref{jig8u} be satisfied.
Further assume that the dimension $d$ of the space $X$ is $\ge2$.  Then
  there exists a conservative diffusion process on $\K(X)$
(i.e., a conservative strong Markov process with
continuous sample paths in $\K(X)$), $$M^\K =(\Omega^\K ,\mathscr
F^\K ,(\mathscr  F^\K _t)_{t\ge0},( \Theta^\K _t)_{t\ge0}, (\mathfrak X^\K (t))_{t\ge 0},(\mathbb P^\K  _\eta)_{\eta\in\K(X)}),$$
(cf.\ \cite{Dy65})
 which is properly
associated with the Dirichlet form $(\mathscr E^\K , D(\mathscr E^\K ))$, i.e., for all ($\mu$-versions
of) $F\in L^2(\K(X),\mu)$ and all $t>0$ the function
$$\K(X)\ni\eta\mapsto
(p^\K_tF)(\eta){:=}\int_{\Omega } F(\mathfrak X (t))\, d\mathbb P^\K _\eta$$
 is an $\mathscr E^\K$-quasi-continuous version of
$\exp(tL^\K)F$ (cf.\ \cite[Chap.~1, Sect.~2]{MR1}). Here $\Omega^\K =C([0,\infty)\to\K(X))$, $\mathfrak X ^\K (t)(\omega)=\omega(t)$, $t\ge 0$, $\omega\in\Omega^\K $,
$(\mathscr  F^\K _t)_{t\ge 0}$ together with $\mathscr F^\K $ is the corresponding
minimum completed admissible family (cf.\
\cite[Section~4.1]{Fu80})  and $\Theta^\K _t$, $t\ge0$, are the
corresponding natural time shifts.

In particular, $M^\K$ is $\mu$-symmetric
(i.e., $\int G\, p^\K _tF\, d\mu=\int F \, p^\K_t G\, d\mu$
for all $F,G:\K(X)\to[0,\infty)$, $\mathscr B
(\K(X))$-measurable) and has $\mu$ as an invariant
measure.

$M^\K$ is up to $\mu$-equivalence unique (cf.\ \cite[Chap.~IV, Sect.~6]{MR1}).
\end{theorem}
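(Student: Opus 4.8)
The plan is to apply the Ma--R\"ockner theory of quasi-regular Dirichlet forms \cite{MR1}. By Proposition~\ref{jig8u} we already know that $(\mathscr E^\K,D(\mathscr E^\K))$ is a symmetric Dirichlet form on $L^2(\K(X),\mu)$ with generator $L^\K$. To obtain a conservative diffusion that is properly associated with it, I would verify three things: first, that the form is quasi-regular, so that \cite[Chap.~IV]{MR1} yields an associated $\mu$-symmetric right process which is unique up to $\mu$-equivalence; second, that the form has the local property, so that this process is a diffusion with continuous sample paths; and third, that the form is conservative, so that the lifetime is $\mathbb P^\K_\eta$-a.s.\ infinite. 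The point-separation and the statement about $p^\K_t$ being an $\mathscr E^\K$-quasi-continuous version of $\exp(tL^\K)F$ are then standard consequences of proper association.

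The natural route is to transfer the whole problem to a configuration space. I would use the bijection
$$\K(X)\ni\eta=\sum_{x\in\tau(\eta)}s_x\delta_x\ \longmapsto\ \gamma=\{(s_x,x):x\in\tau(\eta)\}\in\Gamma_{pf}(\hat X),$$
which identifies a discrete Radon measure with a configuration over $\hat X=\Rp\times X$ that is locally finite in $\hat X$ (condition \eqref{jkufr7u} guarantees this) but whose spatial projection is $\mu$-a.s.\ dense in $X$. Under this identification the test functions $\FCK$ correspond to the cylinder functions $\FCG$, the Gibbs measure $\mu$ becomes a Gibbs measure on $\Gamma_{pf}(\hat X)$ with intensity $\sigma$ and interaction $\phi$, and, crucially, the tangent space $T_\eta(\K)$ of \eqref{hjfuy8y} together with the gradient \eqref{guft} reproduce the canonical configuration-space tangent space and gradient associated with a Riemannian-type structure on $\hat X$; the factor $1/s_x$ in \eqref{guft} means that this structure degenerates as the mass $s$ tends to $0$. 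With this dictionary $\mathscr E^\K$ is a gradient (distorted Brownian motion) Dirichlet form on $\Gamma_{pf}(\hat X)$, whose generator is \eqref{huf7tur}, the logarithmic-derivative drift $\nabla\log l$ coming from $\sigma$ and the remaining drift from $\phi$. This reduces the statement to the existence theory for gradient Dirichlet forms on configuration spaces developed in \cite{MR2,RS2,AKR,KLR}.

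Then I would check the hypotheses of that theory. The local property of $\mathscr E^\K$ is immediate, since by \eqref{guft} the square-field operator is the pointwise sum over atoms of squared first-order derivatives; hence the associated process has continuous paths and is a diffusion. For quasi-regularity one has to construct an $\mathscr E^\K$-nest of compact sets in $\K(X)$ (with the vague topology) together with a countable $D(\mathscr E^\K)$-family separating points; here the Nguyen--Zessin identity \eqref{ghydcydr} supplies the integration-by-parts formula and the finite-moment bound \eqref{iyutfr76r} supplies the integrability needed for the capacity estimates, exactly as in \cite{MR2,RS2}. Conservativeness I would obtain in the standard way, by exhibiting cutoff functions $u_n\in D(\mathscr E^\K)$ with $0\le u_n\uparrow1$ and $\mathscr E^\K(u_n,u_n)\to0$, using the moment bound again to control the contribution of the atoms.

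The main obstacle is quasi-regularity, and within it the handling of the small-mass atoms. Because the L\'evy-type intensity \eqref{hfjiu} forces infinitely many atoms of arbitrarily small mass to accumulate, and because the tangent metric degenerates as $s\downarrow0$ through the factor $1/s$, the capacity estimates must be carried out with weights that simultaneously tame the divergence of $1/s$ and the infinite number of light atoms near a given spatial region; the integrability \eqref{khgiygt} is what makes this feasible. This is also precisely where the assumption $d\ge2$ enters: it is needed to ensure that the configurations in which two atoms share a spatial location form an $\mathscr E^\K$-exceptional set, so that the diffusing atoms almost surely do not collide, a property that genuinely fails in dimension one. Adapting the nest construction of \cite{RS2,AKR} to the cone $\K(X)$ with its degenerate tangent geometry is thus the technical heart of the argument.
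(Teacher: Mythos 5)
Your skeleton does match the paper's: transfer the form to the configuration space $\Gamma_{pf}(\hat X)$ via $\mathscr R^{-1}$, invoke the Ma--R\"ockner theory together with the configuration-space techniques of \cite{MR2,RS2,AKR,KLR}, get the diffusion property from locality, and use $d\ge 2$ to show that configurations with two atoms at the same spatial location form an exceptional set (this is exactly the paper's use of \cite{RS2}). But at the point you yourself call the technical heart there is a genuine gap: you propose to build an $\mathscr E^\K$-nest of compact sets directly in $\K(X)$ (equivalently, in $\Gamma_{pf}(\hat X)$) equipped with the vague topology. This cannot be carried out as stated. Neither $\K(X)$ nor $\Gamma_{pf}(\hat X)$ is closed, let alone complete, in the vague topology: a vague limit of pinpointing configurations can acquire multiple points or two atoms at one site, and the local mass $\mathfrak M_\Lambda$ is not vaguely continuous, so mass can blow up or disappear in the limit. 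The criterion that converts ``distance functions lie in the domain with uniformly dominated square field'' into quasi-regularity (\cite[Theorem~3.4]{RS}, \cite[Proposition~4.1]{MR2}) requires the state space to be a \emph{complete} separable metric space, and the sets of the nest must be compact in the state space itself; on the non-complete space $(\K(X),\text{vague})$ the argument has no footing. Moreover, even granting a diffusion on $\Gamma_{pf}(\hat X)$ with vaguely continuous paths, the theorem's claim of continuous sample paths in $\K(X)$ would not follow, because $\mathscr R$ is \emph{not} continuous from the vague topology of $\Gamma_{pf}(\hat X)$ to that of $\K(X)$: for instance $\gamma_i=\{(i,x_0)\}$ converges vaguely in $\Gamma(\hat X)$ to the empty configuration, while $\mathscr R\gamma_i=i\delta_{x_0}$ diverges in $\mathbb M(X)$.

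What is missing is precisely the paper's central device, a two-stage (enlarge, then restrict) construction. The paper (i) enlarges the state space to $\ddot\Gamma_f(\hat X)$, the $\N_0\cup\{\infty\}$-valued Radon measures on $\hat X$ with finite local mass; (ii) equips it with a hand-made metric $d=d_V+d_f$, where $d_f$ is built from the weighted cutoffs $\varkappa_{kn}(s,x)=\phi_k(x)\psi_n(s)s$ of \eqref{hgu8ytfk} and weights $c_k$ chosen through the moment bound \eqref{iyutfr76r}, so that $(\ddot\Gamma_f(\hat X),d)$ is complete and separable, its Borel structure is unchanged, and every $d(\cdot,\gamma)$ lies in $D(\mathscr E^\Gamma)$ with square field dominated by one integrable function---this is what actually tames the $1/s$ degeneracy and the accumulation of light atoms that you point to; (iii) obtains from \cite{MR1} a conservative diffusion on $\ddot\Gamma_f(\hat X)$ and only then shows that $\ddot\Gamma_f(\hat X)\setminus\Gamma_{pf}(\hat X)$ is $\mathscr E^\Gamma$-exceptional (here $d\ge2$ enters, via \cite{RS2,KLR}); and (iv) proves that $\mathscr R$ \emph{is} continuous from $(\Gamma_{pf}(\hat X),d)$ to $\K(X)$ with the vague topology, which is what legitimizes mapping the process over and claiming path continuity in $\K(X)$. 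Steps (i), (ii) and (iv) have no counterpart in your proposal, and without them neither the quasi-regularity argument nor the final transfer to $\K(X)$ goes through. A minor point in addition: your cutoff construction for conservativeness is unnecessary, since constants belong to $\FCK$, so $1\in D(\mathscr E^\K)$ and $\mathscr E^\K(1,1)=0$.
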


\begin{remark} In addition to \eqref{hfjiu}--\eqref{khgiygt}, let us assume that
 the function $l(s,x)$ satisfies, for each $\Lambda\in\mathscr B_0(X)$,
$$ \int_{\Rp\times\Lambda}l(s,x)s^i\,ds\,dx<\infty,\quad i=2,3.$$
 This implies that the completely random measure $\nu$ satisfies, for each $\Lambda\in\mathscr B_0(X)$,
$$\int_{\K(X)}\eta(\Lambda)^n\,d\nu(\eta)<\infty\quad \text{for }n=1,2,3,4.$$
Then it easily follows from the proofs of
 Proposition \ref{jig8u} and Theorem \ref{vyfd7yr} that  these statements remain true when $l\in C^1(\hat X)$ and the pair potential $\phi$ is equal to zero, i.e., when $\mu=\nu$.

 We note that, in paper \cite{KLV}, for a different choice of a tangent space $T_\eta(\K)$ and   in the case where $l(s,x)=l(s)$ is independent of $x$ and $\mu=\nu$, the corresponding diffusion process on $\K(X)$ was constructed explicitly.
However, for the choice of the tangent space $T_\eta(\K)$ as in this paper, even in the case where $\mu=\nu$, an explicit construction of the diffusion process is an open problem, see Subsec.~5.2 in \cite{KLV}.
\end{remark}

\section{The proofs}\label{ft67r67}

\subsection{Proofs of Lemma \ref{bhuyft7} and Proposition \ref{jig8u}}

We start with the following

\begin{lemma}\label{gfdtst6r}
For any $F,G\in\FCK$,
\begin{align}
&\mathscr E^\K(F,G)=\int_{\K(X)}d\mu(\eta)\int_{\hat X}ds\,dx\, l(s,x)\,\exp\left[-s\int_X \phi(x,x')\,d\eta(x')\right]\notag\\
&\quad\times\bigg[\frac1{s^2}\la \nabla_x F(\eta+s\delta_x),\nabla_xG(\eta+s\delta_x)\ra_X +\bigg(\frac{d}{ds}F(\eta+s\delta_x)\bigg)\bigg(\frac{d}{ds}G(\eta+s\delta_x)\bigg)\bigg].\label{gfytfdy7}
\end{align}
\end{lemma}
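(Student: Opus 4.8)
The plan is to start from the definition \eqref{hfyrf7} of $\mathscr E^\K(F,G)$ and expand the tangent-space inner product using the explicit form \eqref{guft} of the gradient. For $F,G\in\FCK$ and $\eta\in\K(X)$, the inner product in $T_\eta(\K)=L^2(X\to X\times\R,\eta)$ is an integral against $\eta$, which for a discrete measure becomes a sum over atoms. Thus I would first write
$$
\big\la\nabla^\K F(\eta),\nabla^\K G(\eta)\big\ra_{T_\eta(\K)}
=\sum_{x\in\tau(\eta)}\bigg[\frac1{s_x^2}\la\nabla_x F(\eta),\nabla_x G(\eta)\ra_X
+\big(\nabla_{s_x}F(\eta)\big)\big(\nabla_{s_x}G(\eta)\big)\bigg]s_x,
$$
where the factor $s_x$ comes from integrating against $\eta=\sum_x s_x\delta_x$, and the $1/s_x^2$ from the first component of the gradient in \eqref{guft}. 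This reduces $\mathscr E^\K(F,G)$ to the $\mu$-integral of a sum over atoms of a function of the form $\Phi(s_x,x,\eta)$, which is exactly the left-hand side structure of the Nguyen--Zessin identity \eqref{ghydcydr}.

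Next I would apply Theorem~\ref{hjugfuyt8}: taking $F(s,x,\eta):=\Phi(s,x,\eta)$ equal to the summand above, the Nguyen--Zessin identity \eqref{ghydcydr} rewrites the $\mu$-integral of $\sum_{x\in\tau(\eta)}\Phi(s_x,x,\eta)$ as a double integral over $\K(X)$ and $\hat X$, with the Gibbsian reweighting factor $\exp[-s\int_X\phi(x,x')\,d\eta(x')]$ and with $\eta$ replaced by $\eta+s\delta_x$ inside $\Phi$. The key bookkeeping point is the identity $d\sigma(s,x)=\frac{l(s,x)}{s}\,ds\,dx$ from \eqref{gfytre}, so the $1/s$ in $\sigma$ combines with the factor $s_x$ that was produced by the tangent-space integration. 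After substituting $\eta\mapsto\eta+s\delta_x$, the new atom at $x$ has mass $s$, so $s_x$ becomes $s$, $\nabla_{s_x}$ becomes $\frac{d}{ds}$, and $\nabla_x F(\eta)$ becomes $\nabla_x F(\eta+s\delta_x)$; collecting the powers of $s$ gives $s\cdot\frac1s=1$ in front of $l(s,x)$ for the radial term and $s\cdot\frac1s\cdot\frac1{s^2}=\frac1{s^2}$ for the spatial term, which matches \eqref{gfytfdy7} exactly.

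The main obstacle I expect is justifying the interchange of the tangent-space sum with the Nguyen--Zessin rewriting, i.e.\ checking that the nonnegativity/integrability hypotheses of Theorem~\ref{hjugfuyt8} apply. The identity \eqref{ghydcydr} as stated is for functions $F:\hat X\times\K(X)\to[0,\infty]$, whereas the summand $\Phi$ here is signed (it involves products of gradients of $F$ and $G$). I would handle this by decomposing $\Phi$ into its positive and negative parts, or equivalently by polarization from the diagonal case $F=G$ where the summand is nonnegative, and then invoking the finiteness of moments \eqref{iyutfr76r} together with the compact support of the $\varphi_i$ and the boundedness of $g$ and its derivatives to ensure all the integrals are absolutely convergent, so that the identity extends from nonnegative to these integrable signed functions. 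A secondary technical point is the passage from $\int_{X\setminus\{x\}}$ to $\int_X$ in the exponent (and the appearance of $\eta$ versus $\eta-s_x\delta_x$), which is harmless because $\sigma(\tau(\eta)\times\Rp)=0$, exactly as noted at the end of the proof of Theorem~\ref{hjugfuyt8}. Once the interchange is justified, \eqref{gfytfdy7} follows by direct comparison of the resulting integrand with the claimed expression.
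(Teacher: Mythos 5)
Your proof is correct and follows exactly the paper's route: the paper's own proof is just the one-line remark that \eqref{gfytfdy7} follows directly from the definition \eqref{hfyrf7} of $\mathscr E^\K$, the tangent space \eqref{hjfuy8y}, the gradient formulas \eqref{gfuyfr}--\eqref{guft}, and the Nguyen--Zessin identity \eqref{ghydcydr}, which is precisely the computation you spell out, with the same $s$-power bookkeeping against $d\sigma(s,x)=\frac{l(s,x)}{s}\,ds\,dx$. Your extra care in extending \eqref{ghydcydr} from nonnegative to signed integrands (positive/negative parts plus the integrability guaranteed by \eqref{iyutfr76r} and the structure of $\FCK$) fills in a detail the paper leaves implicit.
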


\begin{proof} Formula \eqref{gfytfdy7} follows directly from \eqref{hfyrf7}, \eqref{hjfuy8y}, \eqref{gfuyfr}--\eqref{guft}, and \eqref{ghydcydr}.
\end{proof}

\begin{proof}[Proof of Lemma \ref{bhuyft7}] By (C1) and \eqref{iyutfr76r}, for a fixed $x\in X$, we get
$$\int_{\K(X)} \int_X |\phi(x,x')|\,d\eta(x')\,d\mu(\eta)<\infty.$$
Hence, for $\mu$-a.a.\ $\eta\in\K(X)$,  we have $  \int_X |\phi(x,x')|\,d\eta(x')<\infty$. Therefore,
 on $\hat X\times\K(X)$, the measures
$$l(s,x)\exp\bigg[-s\int_X \phi(x,x')\,d\eta(x')\bigg]ds\,dx \,d\mu(\eta)$$
and $ds\,dx \,d\mu(\eta)$ are equivalent.

Let $F\in\FCK$ be such that  $F=0$ $\mu$-a.e. Then, for any $\Lambda\in\mathscr B_0(X)$, we get by \eqref{ghydcydr}
\begin{align*}&\int_{\K(X)}d\mu(\eta)\int_{\hat X} ds\,dx\, l(s,x)\exp\bigg[-s\int_X \phi(x,x')\,d\eta(x')\bigg] |F(\eta+s\delta_x)|\chi_\Lambda(x)\\
&\quad=\int_{\K(X)} |F(\eta)|\, \eta(\Lambda)\, d\mu(\eta)=0.\end{align*}
Here $\chi_\Lambda$ denotes the indicator function of the set $\Lambda$.
Hence, $F(\eta+s\delta_x)=0$ for $ds\,dx \,d\mu(\eta)$-a.a.\ $(s,x,\eta)\in \hat X\times\K(X)$. For each fixed $\eta\in\K(X)$, the function $(s,x)\mapsto F(\eta+s\delta_x)$ is continuous. Therefore, for $\mu$-a.a.\ $\eta\in\K(X)$, $F(\eta+s\delta_x)=0$ for all $(s,x)\in \hat X$. Hence, by Lemma~\ref{gfdtst6r}, for each $G\in\FCK$, $\mathscr E^\K(F,G)=0$.
\end{proof}

\begin{proof}[Proof of  Proposition \ref{jig8u}]
 We first note that $(\mathscr E^\K,\FCK)$ is a pre-Dirichlet form form on $L^2(\K(X),\mu)$, i.e., if it is closable then its closure is a Dirichlet form. This assertion follows, by standard methods, directly from \cite[Chap.~I, Proposition 4.10]{MR1} (see also \cite[Chap.~II, Exercise 2.7]{MR1}).

 For a fixed $\eta\in\K(X)$, the function $(s,x)\mapsto F(\eta+s\delta_x)$ is constant outside a compact set in $\hat X$. Note also that, for each fixed $\eta\in\K(X)$, the function $x\mapsto\int_X \phi(x,x')\,d\eta(x')$ is differentiable on $X$ and its gradient is equal to $\int_X \nabla_x\phi(x,x')\,d\eta(x')$.
Hence carrying out integration by parts in formula \eqref{gfytfdy7}, we get for any $F,G\in\FCK$,
\begin{align*}
&\mathscr E^\K(F,G)=\int_{\K(X)}d\mu(\eta)\int_{\hat X}ds\,dx\, l(s,x)\,\exp\left[-s\int_X \phi(x,x')\,d\eta(x')\right]
G(\eta+s\delta_x)\\
&\quad\times\bigg[
-\frac1{s^2}\Delta_x F(\eta+s\delta_x)-\frac1{s^2}\la \nabla_x \log l(s,x),\nabla_x F(\eta+s\delta_x)\ra_X\\
&\quad + \frac1s  \int_{X} d\eta(x')\,\big\la\nabla_x\phi(x,x'),\nabla_x F(\eta+s\delta_x)\big\ra_X-\Delta_s F(\eta+s\delta_x)\\
&\quad -
\big(\nabla_s \log l(s,x)\big)\big(\nabla_s F(\eta+s\delta_x)\big) +\bigg(\int_X \phi(x,x')\,d\eta(x')\bigg)\big(\nabla_s F(\eta+s\delta_x)\big)
\bigg].
\end{align*}
Applying formula \eqref{ghydcydr}, we get \eqref{huf7tur}, \eqref{gyfdy6}.

It  easily follows from \eqref{huf7tur} that, for a fixed $F\in\FCK$, there exist $\Lambda\in\mathscr B_0(X)$ and $C>0$ such that
$$ |L^\K F(\eta)|\le C(\eta(\Lambda)+\eta(\Lambda)^2),\quad \eta\in\K(X). $$
Hence, by \eqref{iyutfr76r}, $L^\K F\in L^2(\K(X),\mu)$. Thus, the bilinear form $(\mathscr E^\K,\FCK)$ has $L^2$-generator. Hence, it is closable and its closure is a Dirichlet form. The last statement of the proposition about Friedrichs' extension is a standard fact of functional analysis.
\end{proof}

\subsection{Proof of Theorem \ref{vyfd7yr}}

We will divide the proof into several steps.

{\it Step 1.} To prove the theorem, we will initially construct a diffusion process on a certain subset of the configuration space over $\hat X$.  So in this step, we will  present the necessary definitions and constructions related to the configuration space.

We denote by $\ddot\Gamma(\hat X)$ the space of all $\N_0\cup\{\infty\}$-valued Radon measures on $\hat X$.
 Here $\N_0:=\{0,1,2,\dots\}$.
 The space $\ddot\Gamma(\hat X)$ is endowed  with the vague topology and let $\mathscr B(\ddot\Gamma(\hat X))$ denote the corresponding $\sigma$-algebra.

The configuration space over $\hat X$, denoted by $\Gamma(\hat X)$, is defined as the collection of all locally finite subsets of $\hat X$:
$$\Gamma(\hat X):=\big\{\gamma\subset \hat X\mid |\gamma\cap A|<\infty\text{ for each compact }A\subset \hat X\,\big\}.
$$
Here $|\gamma\cap A|$ denotes the cardinality of the  set $\gamma\cap A$.
One usually identifies a configuration $\gamma\in\Gamma(\hat X)$ with the Radon measure $\sum_{(s,x)\in\gamma}\delta_{(s,x)}$ on $\hat X$. Thus, one gets the inclusion $\Gamma(\hat X)\subset \ddot\Gamma(\hat X)$.

Let $\Gamma_{pf}(\hat X)$ denote the  subset of $\Gamma(\hat X)$ which consists of all configurations $\gamma$ which satisfy:

\begin{itemize}
\item[(i)] if $(s_1,x_1),(s_2,x_2)\in\gamma$ and $(s_1,x_1)\ne(s_2,x_2)$, then $x_1\ne x_2$;

\item[(ii)] for each  $\Lambda\in\mathscr B_0(X)$, $\displaystyle\sum_{(s,x)\in\gamma\cap (\Rp\times \Lambda)}s<\infty$.
\end{itemize}

We have $\Gamma_{pf}(\hat X)\in\mathscr B(\ddot\Gamma(\hat X))$, and we
denote by $\mathscr B(\Gamma_{pf}(\hat X))$ the trace $\sigma$-algebra of $\mathscr B(\ddot\Gamma(\hat X))$  on $\Gamma_{pf}(\hat X)$. Equivalently, $\mathscr B(\Gamma_{pf}(\hat X))$ is the Borel $\sigma$-algebra on the space $\Gamma_{pf}(\hat X)$ equipped with the vague topology.

The following statement is proven in \cite[Theorem~6.2]{HKPR}.

\begin{proposition}[\cite{HKPR}]\label{ghig}
Consider a bijective mapping
$\mathscr R  :\Gamma_{pf}(\hat X)\to\K(X) $ defined by
\begin{equation}\label{igyu8}
\Gamma_{pf}(\hat X)\ni \gamma=\{(s_i,x_i)\}\mapsto \mathscr R\gamma:=\sum_i s_i\delta_{x_i}\in \K(X).\end{equation}
Then  the mapping $\mathscr R$ and its inverse $\mathscr R^{-1}:\K(X)\to \Gamma_{pf}(\hat X)$ are measurable.
\end{proposition}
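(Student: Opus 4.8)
The plan is to verify the bijectivity of $\mathscr R$ by hand, then establish measurability of $\mathscr R$ and of $\mathscr R^{-1}$ separately, reserving a soft descriptive-set-theoretic argument for the inverse.

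First I would check that $\mathscr R$ is well defined, i.e.\ that $\mathscr R\gamma\in\K(X)$ for $\gamma=\{(s_i,x_i)\}\in\Gamma_{pf}(\hat X)$. Property (i) guarantees that the atoms $x_i$ are pairwise distinct, so $\sum_i s_i\delta_{x_i}$ is a genuine discrete measure with positive masses, while property (ii) says precisely that $(\mathscr R\gamma)(\Lambda)=\sum_{x_i\in\Lambda}s_i<\infty$ for every $\Lambda\in\mathscr B_0(X)$, i.e.\ $\mathscr R\gamma$ is Radon. Injectivity is immediate: from $\eta=\mathscr R\gamma$ one recovers $\tau(\eta)$ and the masses $s_x=\eta(\{x\})$, and by (i) the correspondence $x\mapsto(s_x,x)$ reconstructs $\gamma$ uniquely. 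For surjectivity, given $\eta=\sum_{x\in\tau(\eta)}s_x\delta_x\in\K(X)$ I set $\gamma:=\{(s_x,x):x\in\tau(\eta)\}$; conditions (i) and (ii) then hold trivially. The only point needing an argument is that $\gamma$ is locally finite in $\hat X$. Since, in the metric $d_{\Rp}$, a compact subset of $\hat X$ is contained in some $[a,b]\times K$ with $0<a<b<\infty$ and $K\subset X$ compact, and since $\eta(K)<\infty$, at most $\eta(K)/a$ atoms of $\eta$ lying in $K$ can carry mass $\ge a$; hence $\gamma\cap([a,b]\times K)$ is finite and $\gamma\in\Gamma_{pf}(\hat X)$ with $\mathscr R\gamma=\eta$.

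Next I would prove that $\mathscr R$ is measurable. The Borel $\sigma$-algebra on $\K(X)$ (as a subspace of $\mathbb M(X)$ with the vague topology) is generated by the evaluations $\eta\mapsto\la f,\eta\ra$, $f\in C_0(X)$, so it suffices to show that $\gamma\mapsto\la f,\mathscr R\gamma\ra=\int_{\hat X}s f(x)\,d\gamma(s,x)$ is measurable for each such $f$. The integrand $(s,x)\mapsto sf(x)$ is continuous but not compactly supported in $\hat X$, so I would approximate it by $G_n(s,x):=sf(x)\psi_n(s)$ with $\psi_n\in C_0(\Rp)$ and $0\le\psi_n\uparrow 1$; then each $G_n\in C_0(\hat X)$, and $\gamma\mapsto\la G_n,\gamma\ra$ is vaguely continuous on $\ddot\Gamma(\hat X)$, hence measurable on $\Gamma_{pf}(\hat X)$ for the trace $\sigma$-algebra. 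Because $\sum_i s_i|f(x_i)|\le\|f\|_\infty\,(\mathscr R\gamma)(\operatorname{supp}f)<\infty$ by (ii), dominated convergence gives $\la G_n,\gamma\ra\to\la f,\mathscr R\gamma\ra$ pointwise, so the limit is measurable.

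Finally, for the measurability of $\mathscr R^{-1}$ I would avoid reconstructing the atoms of $\eta$ directly and instead invoke the Lusin--Souslin theorem. Both $\Gamma_{pf}(\hat X)$ and $\K(X)$ are standard Borel spaces, being Borel subsets of the Polish spaces $\ddot\Gamma(\hat X)$ and $\mathbb M(X)$ of Radon measures with the vague topology (recall $\Gamma_{pf}(\hat X)\in\mathscr B(\ddot\Gamma(\hat X))$ and $\K(X)\in\mathscr B(\mathbb M(X))$). An injective Borel map between standard Borel spaces sends Borel sets to Borel sets and is a Borel isomorphism onto its image; applied to the bijection $\mathscr R$, this immediately yields that $\mathscr R^{-1}$ is measurable. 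The main obstacle is exactly this inverse direction: a direct proof would require reading off the pairs $(s_x,x)$ from $\eta$ in a measurable fashion --- feasible via a refining-partition scheme exploiting $\eta(C)\to\eta(\{x\})=s_x$ for cells $C\downarrow\{x\}$ together with the fact that only finitely many atoms in a compact set carry mass $\ge a$ --- but this is considerably more delicate than the soft argument, which is why I would rely on Lusin--Souslin.
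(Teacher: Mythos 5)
Your proof is correct, but note that there is nothing in this paper to compare it against: Proposition \ref{ghig} is imported verbatim from \cite{HKPR} (their Theorem~6.2), and the present paper gives no argument for it at all. Judged on its own merits, your reconstruction is sound. The bijectivity check is right: property (i) gives distinctness of atoms (hence injectivity and well-definedness), property (ii) is exactly local finiteness of mass (hence $\mathscr R\gamma$ is Radon), and your observation that a compact subset of $\hat X$ sits inside some $[a,b]\times K$ with $0<a<b<\infty$ --- because compacts of $(\Rp,d_{\Rp})$ are bounded away from $0$ and $\infty$ --- correctly settles the only delicate point of surjectivity, namely that at most $\eta(K)/a$ atoms in $K$ carry mass $\ge a$. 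The measurability of $\mathscr R$ via the cutoffs $G_n(s,x)=sf(x)\psi_n(s)\in C_0(\hat X)$ and dominated convergence (dominated by $\|f\|_\infty\, s\,\chi_{\operatorname{supp}f}(x)$, summable by (ii)) is clean, granting the standard fact that $\mathscr B(\mathbb M(X))$ is generated by the evaluations $\eta\mapsto\la f,\eta\ra$, $f\in C_0(X)$. The Lusin--Souslin step is also legitimate: both spaces are Borel subsets of Polish spaces ($\ddot\Gamma(\hat X)$ with the metric $d_V$ of \cite{MR2}, and $\mathbb M(X)$ with the vague topology), hence standard Borel, and an injective Borel map between standard Borel spaces maps Borel sets to Borel sets, which is precisely the measurability of $\mathscr R^{-1}$. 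One small refinement: you do not actually need to assume $\K(X)\in\mathscr B(\mathbb M(X))$ (itself a result of \cite{HKPR}) for this step --- Lusin--Souslin applied to $\mathscr R$ viewed as a map into the Polish space $\mathbb M(X)$ shows that the image $\mathscr R(\Gamma_{pf}(\hat X))=\K(X)$ is Borel, so your soft argument delivers that fact as a by-product rather than consuming it.
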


Note that the pushforward of the completely random measure $\nu$ under $\mathscr R^{-1}$ is the Poisson measure on $\Gamma(\hat X)$ with intensity measure $\sigma$: if we denote this measure by $\pi$, the Fourier transform of $\pi$ is given by
$$ \int_{\Gamma_{pf}(\hat X)}e^{i\la f,\gamma\ra}\,d\pi(\gamma)=\exp\bigg[\int_{\hat X}(e^{if(s,x)}-1)\,d\sigma(s,x)\bigg],\quad f\in C_0(\hat X).$$
Here we denote $\la f,\gamma\ra:=\int_{\hat X}f\,d\gamma=\sum_{(s,x)\in\gamma}f(s,x)$.

Let $\rho$ denote the pushforward of the Gibbs measure $\mu$ under $\mathscr R^{-1}$. By Theorem~\ref{hjugfuyt8} and \eqref{igyu8}, the measure $\rho$ satisfies,
for each measurable function $F:\hat X\times\Gamma(\hat X)\to[0,\infty]$,
\begin{multline*}
\int_{\Gamma_{pf}(\hat X)}\sum_{(s,x)\in\gamma}F(s,x,\gamma)\,d\rho(\gamma)\\
=\int_{\Gamma_{pf}(\hat X)}d\rho(\gamma)\int_{\hat X}d\sigma(s,x)\,\exp\left[-\sum_{(s',x')\in\gamma}ss'\phi(x,x')
\right]F(s,x,\gamma\cup \{(s,x)\}).\end{multline*}

 Let $\FCG$ denote the set of functions on $\Gamma_{pf}(\hat X)$ which are of the form $F(\gamma)=G(\mathscr R\gamma)$ for some $G\in\FCK$. Thus, $\FCG$ consists of all functions $F$ of the form
$$
 F(\gamma)=g\big(\langle\varphi_1,\gamma\rangle,\dots,\langle \varphi_N,\gamma\rangle\big),\quad \gamma\in\Gamma_{pf}(\hat X),$$
where the functions $g,\varphi_1,\dots,\varphi_N$ are as in \eqref{hufutr}.  Thus, we may equivalently consider a bilinear form $(\mathscr E^\Gamma,\FCG)$ on $L^2(\Gamma_{pf}(\hat X),\rho)$ which is defined by
$$\mathscr E^\Gamma(F,G):=\mathscr E^\K(F\circ\mathscr R^{-1},G\circ\mathscr R^{-1}),\quad F,G\in\FCG.$$
As easily seen, for any $F,G\in\FCG$, we have
$$\mathscr E^\Gamma (F,G)=\int_{\Gamma(\hat X)}\sum_{(s,x)\in\gamma}\bigg[
\frac1s\la\nabla_x F(\gamma),\nabla_x G(\gamma)\ra_X+s\big(\nabla_s F(\gamma)\big(\nabla_s G(\gamma)\big)
\bigg] d\rho(\gamma),$$
where $\nabla_x F(\gamma)$ and $\nabla_s G(\gamma)$
are defined analogously to formulas \eqref{gfuyfr}, \eqref{vyd6trde}.
By Proposition~\ref{jig8u}, the bilinear form $(\mathscr E^\Gamma,\FCG)$ is closable on $L^2(\Gamma_{pf}(\hat X),\rho)$, and its closure, denoted by $(\mathscr E^\Gamma,D(\mathscr E^\Gamma))$, is a Dirichlet form.

{\it Step 2.}  Our aim now is to construct a diffusion process on $\Gamma_{pf}(\hat X)$ which is properly associated with the Dirichlet form
 $(\mathscr E^\Gamma,D(\mathscr E^\Gamma))$. We will initially construct such a process on a bigger space $\ddot\Gamma_f(\hat X)$. In this step, we will define the set $\ddot\Gamma_f(\hat X)$ and construct a metric on it such that the  set $\ddot\Gamma_f(\hat X)$ equipped with this metric is a Polish space.

  For each $\Lambda\in\mathscr B_0(X)$, we
  define a local mass $\mathfrak M_\Lambda$ by
$$\mathfrak M_\Lambda(\gamma):= \int_{\hat X}\chi_\Lambda(x)s\,d\gamma(s,x),\quad \gamma\in\ddot\Gamma(\hat X).$$
 We set
$$\ddot\Gamma_{f}(\hat X):=\big\{\gamma\in \ddot\Gamma(\hat X)\mid \mathfrak M_\Lambda(\gamma)<\infty\text{ for each  }\Lambda\in\mathscr B_0(X)\big\}.$$
We have $\ddot\Gamma_{f}(\hat X)\in\mathscr B(\ddot\Gamma(\hat X))$, and let $\mathscr B(\ddot\Gamma_{f}(\hat X))$ denote the Borel $\sigma$-algebra on the space $\ddot\Gamma_{f}(\hat X)$ equipped with the vague topology.

We will now construct a bounded metric on $\ddot\Gamma_{f}(\hat X)$ in which this space will be complete and separable.  Let $d_V(\cdot,\cdot)$ denote the bounded metric on $\ddot\Gamma(\hat X)$ which was introduced in \cite[Section~3]{MR2}. Recall that this metric generates the vague topology on $\ddot\Gamma(\hat X)$, and $\ddot\Gamma(\hat X)$ is complete and separable in this metric.

For each $k\in\N$, we fix any function $\phi_k\in C_0^\infty(X)$ such that
\begin{align}&\chi_{B(k)}\le\phi_k\le\chi_{B(k+1)},\quad
\left|\frac{\partial}{\partial x_i}\,\phi_k(x)\right|\le2\,\chi_{B(k+1)}(x),\notag\\
&\qquad  i=1,\dots,d,\ x=(x^1,\dots,x^d)\in X. \label{1}
\end{align}
 Here
 $$B(k):=
\big\{x=(x^1,\dots,x^d)\in X\mid \max_{i=1,\dots,d}|x_i|\le k\big\}.$$
 Next, we fix any $q\in(0,1)$. We take any  sequence $(\psi_n)_{n\in\Z}$ such that, for each $n\in\Z$, $\psi_n\in C_0^\infty(\R)$  and
\begin{equation}\label{yufr8r}
\chi_{[q^n,\,q^{n-1}]}\le \psi_n\le \chi_{[q^{n+1},\,q^{n-2}]},\quad |\psi_n'|\le \frac2{q^n-q^{n+1}}\,\chi_{[q^{n+1},\,q^{n}]\cup[q^{n-1},\,q^{n-2}]}.
\end{equation}
For each $k\in\N$ and $n\in\Z$, we define
\begin{equation}\label{hgu8ytfk}\varkappa_{kn}(s,x):=\phi_k(x)\psi_n(s)s,\quad (s,x)\in\hat X.\end{equation}
 Note that $\varkappa_{kn}\in C_0^\infty(\hat X)$. For any
 $k\in\N$ and
 $\gamma,\gamma'\in\ddot\Gamma_{f}(\hat X)$, we define
\begin{equation}\label{rererre}
d_k(\gamma,\gamma'):=\sum_{n\in\Z}|\la \varkappa_{kn},\gamma-\gamma'\ra|.\end{equation}
As follows from \eqref{1} and \eqref{yufr8r}, for each $\gamma\in\ddot\Gamma_f(\hat X)$,
\begin{align}
\sum_{n\in\Z}\la \varkappa_{kn},\gamma\ra&=\int_{\hat X}d\gamma(s,x)
\,\phi_k(x)\left(\sum_{n\in\Z}\psi_n(s)\right)s\notag\\
&\le 4\int_{\hat X}d\gamma(s,x)\phi_k(x)s\le 4\,\mathfrak M_{B(k+1)}(\gamma)<\infty.\label{bhitgiftr}\end{align}
 Therefore, $d_k(\gamma,\gamma')<\infty$ for all $\gamma,\gamma'\in\ddot\Gamma_{f}(\hat X)$. Clearly,  $d_k(\cdot,\cdot)$ satisfies the triangle inequality.

 Let $(c_k)_{k=1}^\infty$ be a sequence of $c_k>0$ such that $\sum_{k=1}^\infty c_k<\infty$. Below, in formula~\eqref{yut8}, we will make an explicit choice of the sequence $(c_k)_{k=1}^\infty$.
  We next define
$$ d_f(\gamma,\gamma'):=\sum_{k=1}^\infty c_k\, \frac{d_k(\gamma,\gamma')}{1+d_k(\gamma,\gamma')}\,,\quad \gamma,\gamma'\in\ddot\Gamma_f(\hat X).$$
Clearly, $d_f(\cdot,\cdot)$ also satisfies the triangle inequality.
We finally define the metric
$$d(\gamma,\gamma'):=d_V(\gamma,\gamma')+d_f(\gamma,\gamma'),\quad \gamma,\gamma'\in\ddot\Gamma_f(\hat X).$$

\begin{proposition}\label{jkgiugt}
$(\ddot\Gamma_f(\hat X), d(\cdot,\cdot))$ is a complete, separable metric space.
\end{proposition}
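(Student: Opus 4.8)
The plan is to verify three things in turn: that $d$ is genuinely a metric (the only nontrivial point being that $d(\gamma,\gamma')=0$ forces $\gamma=\gamma'$), that the space is separable, and that it is complete. Separability comes essentially for free: the metric $d_V$ already makes $\ddot\Gamma(\hat X)$ separable, and adding the term $d_f$ cannot destroy separability since the topology generated by $d$ is finer than the vague topology — a countable $d_V$-dense set, suitably enlarged, will serve. The main work is in the other two points, and both reduce to understanding exactly what convergence in the $d_f$ part means.

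First I would establish the key claim that $d(\gamma,\gamma')=0$ implies $\gamma=\gamma'$. The term $d_V(\gamma,\gamma')=0$ already forces $\gamma=\gamma'$ as measures on $\hat X$, so this is immediate and $d$ is a metric; the role of $d_f$ is not to separate points but to strengthen the topology so as to control the local masses $\mathfrak M_\Lambda$. The crucial analytic observation, which I would isolate as a lemma, is that the finite family $\{\varkappa_{kn}\}_{n\in\Z}$ recovers the local mass: by \eqref{yufr8r} the functions $\psi_n$ form a locally finite partition-of-unity-type family on $\Rp$ with $\sum_n\psi_n$ bounded between $1$ and $4$, so $\sum_{n\in\Z}\la\varkappa_{kn},\gamma\ra$ is comparable to $\mathfrak M_{B(k+1)}(\gamma)$ as already shown in \eqref{bhitgiftr}. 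Consequently $d_k$-convergence controls each pairing $\la\varkappa_{kn},\cdot\ra$ and hence, after summing, controls the masses of $\gamma$ in each box $B(k)$.

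For completeness, take a $d$-Cauchy sequence $(\gamma_m)$. Since $d_V\le d$, it is $d_V$-Cauchy, so it converges vaguely to some $\gamma\in\ddot\Gamma(\hat X)$ by completeness of $(\ddot\Gamma(\hat X),d_V)$. The remaining task is to show $\gamma\in\ddot\Gamma_f(\hat X)$ and that $d_f(\gamma_m,\gamma)\to0$. The sequence being $d$-Cauchy makes each $d_k(\gamma_m,\gamma_{m'})$ small for large $m,m'$, hence each $\la\varkappa_{kn},\gamma_m\ra$ is Cauchy in $\R$ and converges; since $\varkappa_{kn}\in C_0^\infty(\hat X)$, vague convergence identifies the limit as $\la\varkappa_{kn},\gamma\ra$. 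A uniform-in-$m$ bound on $\sum_n|\la\varkappa_{kn},\gamma_m\ra|$ (Cauchy sequences are bounded, combined with \eqref{bhitgiftr}) lets me apply Fatou to conclude $\sum_n\la\varkappa_{kn},\gamma\ra<\infty$, so $\mathfrak M_{B(k)}(\gamma)<\infty$ for every $k$, i.e.\ $\gamma\in\ddot\Gamma_f(\hat X)$. Finally, to get $d_f(\gamma_m,\gamma)\to0$ I would fix $\varepsilon$, use $\sum_k c_k<\infty$ to truncate the $k$-sum, and on the finite remaining range pass to the limit in each $d_k(\gamma_m,\gamma)$ using the established convergence $\la\varkappa_{kn},\gamma_m\ra\to\la\varkappa_{kn},\gamma\ra$ together with dominated convergence over $n$ (the uniform $\ell^1$-in-$n$ bound supplies the dominating summable envelope).

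The step I expect to be the main obstacle is exchanging the three limits — in $m$, in $n\in\Z$, and the vague limit defining $\gamma$ — when proving $d_f(\gamma_m,\gamma)\to 0$. Vague convergence only directly controls each individual pairing $\la\varkappa_{kn},\gamma_m\ra$, whereas $d_k$ is an infinite sum over $n$; the interchange is legitimate only because the Cauchy property supplies a uniform tail bound $\sum_{|n|>n_0}|\la\varkappa_{kn},\gamma_m\ra|<\varepsilon$ independent of $m$, which is where the compatibility between the cutoffs $\psi_n$ and the finiteness of $\mathfrak M_{B(k+1)}$ must be used carefully. Once that uniform tail estimate is in hand, the remaining $\varepsilon$-management is routine.
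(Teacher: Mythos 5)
Your completeness argument is, in substance, the paper's own proof: pass to the vague limit $\gamma$ using completeness of $(\ddot\Gamma(\hat X),d_V)$, then analyse, for each fixed $k$, the sequences $\bigl(\la \varkappa_{kn},\gamma_i\ra\bigr)_{n\in\Z}$ as elements of $\ell^1(\Z)$. The paper observes that the $d$-Cauchy property makes these sequences Cauchy in $\ell^1(\Z)$, invokes completeness of $\ell^1(\Z)$, and identifies the limit as $\bigl(\la \varkappa_{kn},\gamma\ra\bigr)_{n\in\Z}$ via the pointwise convergence supplied by vague convergence; this gives both $\gamma\in\ddot\Gamma_f(\hat X)$ (using $\sum_n\psi_n\ge1$ from \eqref{yufr8r}) and $d_k(\gamma_i,\gamma)\to0$ in one stroke. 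You instead re-derive the $\ell^1$ convergence by hand, from pointwise convergence plus a uniform tail estimate drawn from the Cauchy property --- which is exactly the standard proof that $\ell^1$ is complete; fine, just less economical. However, one of your formulations would fail if taken literally: a ``uniform $\ell^1$-in-$n$ bound'' does \emph{not} supply a ``dominating summable envelope'' for dominated convergence. The moving bump $a^{(m)}_n=\delta_{nm}$ is uniformly bounded in $\ell^1(\Z)$ and converges pointwise to $0$, yet does not converge in $\ell^1$, and admits no summable envelope. What saves the interchange of limits is precisely, and only, the uniform tail bound of your final paragraph, which comes from Cauchyness, not from boundedness; that should be the argument, and the appeal to a dominating envelope should be deleted.

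The second flaw is the separability claim. The reasoning ``the topology generated by $d$ is finer than the vague topology, hence separability cannot be destroyed'' is backwards: refining a topology can perfectly well destroy separability (the discrete topology on $\R$ refines the Euclidean one). So nothing comes for free here; separability of $(\ddot\Gamma_f(\hat X),d)$ needs a direct construction --- for instance, show that configurations with finitely many atoms, rational weights and rational coordinates are $d$-dense, truncating the $k$-sum via $\sum_k c_k<\infty$ and the $n$-sum (i.e.\ very small and very large $s$) via $\mathfrak M_{B(k+1)}(\gamma)<\infty$. The paper also dismisses this step as routine, so the omission is minor, but your stated justification is not a proof and should be replaced by such a construction.
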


\begin{proof}
Let $\{\gamma_i\}_{i=1}^\infty$ be a Cauchy sequence in $(\ddot\Gamma_f(\hat X), d(\cdot,\cdot))$. Then $\{\gamma_i\}_{i=1}^\infty$ is a Cauchy sequence in $(\ddot\Gamma(\hat X),d_V(\cdot,\cdot))$. Since the latter space is complete, there exists $\gamma\in \ddot\Gamma(\hat X)$ such that $\gamma_i\to\gamma$ vaguely as $i\to\infty$.
Denote
$$ a_{kn}^{(i)}:=\la \varkappa_{kn},\gamma_i\ra,\quad a_{kn}:=\la \varkappa_{kn},\gamma\ra,\quad k\in\N,\ n\in\Z.$$
As $\varkappa_{kn}\in C_0(\hat X)$, we therefore get:
\begin{equation}\label{mnohy} \text{for each $k\in\N$ and $n\in\Z$}\quad a_{kn}^{(i)}\to a_{kn}\text{ as }i\to\infty.\end{equation}
Note that, for each $k\in\N$ and $i\in\N$, $a_{kn}^{(i)}\ge0$ for all $n\in\Z$ and by \eqref{bhitgiftr}
$$\sum_{n\in\N}a_{kn}^{(i)}<\infty.$$
Hence, $(a_{kn}^{(i)})_{n\in\Z}\in\ell^1(\Z)$.
As $\{\gamma_i\}_{i=1}^\infty$ is a Cauchy sequence in $(\ddot\Gamma_f(\hat X), d(\cdot,\cdot))$,
$$ \lim_{i,j\to\infty}\sum_{n\in\Z}|a_{kn}^{(i)}-a_{kn}^{(j)}|=\lim_{i,j\to\infty} d_k(\gamma_i,\gamma_j)=0,\quad k\in\N.$$
Hence, $\{(a_{kn}^{(i)})_{n\in\Z}\}_{i=1}^\infty$ is a Cauchy sequence in $\ell^1(\Z)$. Since the latter space is complete, the sequence $\{(a_{kn}^{(i)})_{n\in\Z}\}_{i=1}^\infty$ is convergent in $\ell^1(\Z)$. In view of \eqref{mnohy}, we therefore conclude that the $\ell^1(\Z)$-limit of this sequence is $(a_{kn})_{n\in\Z}$. This, in particular, implies that
\begin{equation}\label{vgyudr7ky}\sum_{n\in\Z}a_{kn}=\sum_{n\in\Z}\la \varkappa_{kn},\gamma\ra<\infty,\quad k\in\N.\end{equation}
By \eqref{yufr8r}, $\sum_{n=1}^\infty \psi_n(s)\ge 1$ for all $s\in\Rp$.  We therefore deduce from \eqref{vgyudr7ky} that $\gamma\in\ddot\Gamma_f(\hat X)$. Furthermore,
$$ d_k(\gamma_i,\gamma)=\sum_{n\in\Z}|a_{kn}^{(i)}-a_{kn}|\to0\quad\text{as }i\to\infty,\quad k\in\N.$$
 Hence $d(\gamma_i,\gamma)\to0$ as $i\to\infty$. Thus,
$(\ddot\Gamma_f(\hat X), d(\cdot,\cdot))$ is  complete. The proof of the separability of this space is routine, so we skip it.
\end{proof}

{\it Step 3}. We will now consider $(\mathscr E^\Gamma,D(\mathscr E^\Gamma))$ as a Dirichlet form on $L^2(\ddot\Gamma_f(\hat X)),\rho)$ and prove that is is quasi-regular. For the definition of quasi-regularity of a Dirichlet form,  see \cite[Chap.~IV, Def.~3.1]{MR1} and \cite[subsec.~4.1]{MR2}.

We  consider the complete separable metric space $(\ddot\Gamma_f(\hat X), d(\cdot,\cdot))$, and let $\mathscr B(\ddot\Gamma_f(\hat X),d)$ denote the corresponding Borel $\sigma$-algebra on $\ddot\Gamma_f(\hat X)$.

\begin{lemma}\label{bugyfyfy}
 We have $\mathscr B(\ddot\Gamma_{f}(\hat X))=\mathscr B(\ddot\Gamma_{f}(\hat X),d)$.
\end{lemma}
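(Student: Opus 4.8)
The plan is to establish the two inclusions separately, the first being immediate and the second requiring the explicit structure of the metric $d_f$.

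For the inclusion $\mathscr B(\ddot\Gamma_f(\hat X))\subseteq\mathscr B(\ddot\Gamma_f(\hat X),d)$, I would simply observe that $d=d_V+d_f\ge d_V$, so the identity map from $(\ddot\Gamma_f(\hat X),d)$ to $(\ddot\Gamma_f(\hat X),d_V)$ is Lipschitz, hence continuous and Borel measurable. Since $d_V$ generates the vague topology, the $d_V$-Borel $\sigma$-algebra on $\ddot\Gamma_f(\hat X)$ coincides with $\mathscr B(\ddot\Gamma_f(\hat X))$, and continuity of this identity map gives $\mathscr B(\ddot\Gamma_f(\hat X))\subseteq\mathscr B(\ddot\Gamma_f(\hat X),d)$ at once.

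The substantive direction is $\mathscr B(\ddot\Gamma_f(\hat X),d)\subseteq\mathscr B(\ddot\Gamma_f(\hat X))$. By Proposition~\ref{jkgiugt} the space $(\ddot\Gamma_f(\hat X),d)$ is separable, so its Borel $\sigma$-algebra is generated by the countable family of open balls $\{\gamma:d(\gamma,\gamma_0)<r\}$ with $\gamma_0$ ranging over a countable dense set and $r>0$ rational. Hence it suffices to show that, for each fixed $\gamma_0\in\ddot\Gamma_f(\hat X)$, the function $\gamma\mapsto d(\gamma,\gamma_0)$ is $\mathscr B(\ddot\Gamma_f(\hat X))$-measurable, i.e.\ measurable with respect to the vague Borel structure. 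Writing $d=d_V+d_f$, the term $d_V(\cdot,\gamma_0)$ is vaguely continuous, so it remains to treat $d_f$. To do so I would unwind its definition through $d_k$ and the functionals $\gamma\mapsto\la\varkappa_{kn},\gamma\ra$. Since each $\varkappa_{kn}\in C_0^\infty(\hat X)\subset C_0(\hat X)$ and these functionals are finite on $\ddot\Gamma_f(\hat X)$ by \eqref{bhitgiftr}, each map $\gamma\mapsto|\la\varkappa_{kn},\gamma-\gamma_0\ra|$ is vaguely continuous, hence vaguely Borel. The function $d_k(\cdot,\gamma_0)=\sum_{n\in\Z}|\la\varkappa_{kn},\gamma-\gamma_0\ra|$ is then a countable sum of nonnegative vaguely Borel functions, so it is the increasing limit of its partial sums and is itself vaguely Borel; composing with the continuous map $t\mapsto t/(1+t)$ and taking the further countable sum $\sum_k c_k\,d_k/(1+d_k)$ shows that $d_f(\cdot,\gamma_0)$ is vaguely Borel. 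Therefore $d(\cdot,\gamma_0)$ is $\mathscr B(\ddot\Gamma_f(\hat X))$-measurable, every $d$-open ball lies in $\mathscr B(\ddot\Gamma_f(\hat X))$, and the desired inclusion follows.

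The only point requiring care---the main obstacle, such as it is---is the measurability of the two countably infinite sums defining $d_k$ and $d_f$; this is resolved by noting that each is a monotone limit of measurable partial sums, with convergence guaranteed on $\ddot\Gamma_f(\hat X)$ by the estimate \eqref{bhitgiftr}. Combining the two inclusions yields $\mathscr B(\ddot\Gamma_f(\hat X))=\mathscr B(\ddot\Gamma_f(\hat X),d)$.
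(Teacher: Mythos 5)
Your proof is correct and follows essentially the same route as the paper: the inclusion $\mathscr B(\ddot\Gamma_{f}(\hat X))\subset\mathscr B(\ddot\Gamma_{f}(\hat X),d)$ from $d\ge d_V$, and the reverse inclusion by showing $\gamma\mapsto d(\gamma,\gamma_0)$ is vaguely Borel and invoking separability to reduce $d$-open sets to countable unions of balls. The only difference is that you spell out the measurability of $d_f(\cdot,\gamma_0)$ (monotone limits of partial sums of the vaguely continuous functionals $\la\varkappa_{kn},\cdot\ra$), which the paper dispatches with the phrase ``it follows from the construction of the metric.''
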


\begin{proof} We have $d(\gamma,\gamma')\ge d_V(\gamma,\gamma')$ for all $\gamma,\gamma'\in\ddot\Gamma_{f}(\hat X)$. Therefore, $\mathscr B(\ddot\Gamma_{f}(\hat X))\subset\mathscr B(\ddot\Gamma_{f}(\hat X),d)$. On the other hand, it follows from  the construction of the metric $d(\cdot,\cdot)$ that,  for a fixed $\gamma'\in\ddot\Gamma_f(\hat X)$, the function
$$\ddot\Gamma_f(\hat X)\ni\gamma\mapsto d(\gamma,\gamma')\in\R$$
is $\mathscr B(\ddot\Gamma_{f}(\hat X))$-measurable. Hence, for any $\gamma'\in\ddot\Gamma_f(\hat X)$ and $r>0$, 
\begin{equation}\label{khguygfu}
\{\gamma\in\ddot\Gamma_f(\hat X)\mid d(\gamma,\gamma')<r\}\in \mathscr B(\ddot\Gamma_{f}(\hat X)).\end{equation}
But in a  separable metric space, every open set can be represented as a countable union of open balls, see e.g.\ Theorem 2 and its proof in \cite[p.~206]{Kur}. Hence, \eqref{khguygfu} implies
 the inclusion $\mathscr B(\ddot\Gamma_{f}(\hat X),d)\subset\mathscr B(\ddot\Gamma_{f}(\hat X))$.
\end{proof}

We will now  consider 
$\rho$ as a probability measure on the measurable space $(\ddot\Gamma_f(\hat X),\linebreak\mathscr B(\ddot\Gamma_f(\hat X)))$, and $(\mathscr E^\Gamma,D(\mathscr E^\Gamma))$ as a Dirichlet form on the space $L^2(\ddot\Gamma_f(\hat X),\rho)$.

On $D(\mathscr E^\Gamma)$ we consider the norm
$$\|F\|_{D(\mathscr E^\Gamma)}:=\mathscr E^\Gamma(F,F)^{1/2}+\|F\|_{L^2(\ddot\Gamma_{f}(\hat X),\,\rho)}.$$
We define a square field operator
\begin{equation}\label{huf7u}
S^\Gamma(F)(\gamma):=\sum_{(s,x)\in\gamma}\bigg[
\frac1s\, \|\nabla_x F(\gamma)\|_X^2 +s\big|\nabla_s F(\gamma)\big|^2
\bigg], \end{equation}
where $F\in\FCG$,  $\gamma\in\Gamma_{pf}(\hat X)$, and $\|\cdot\|_X$ denotes the Euclidean norm in $X$. As easily seen, $S^\Gamma$ extends by continuity in the norm $\|\cdot\|_{D(\mathscr E^\Gamma)}$ to a mapping $S^\Gamma:D(\mathscr E^\Gamma)\to L^1(\ddot\Gamma_{f}(\hat X),\rho)$, and furthermore
$\mathscr E^\Gamma(F,F)=\int_{\ddot\Gamma_{f}(\hat X)} S^\Gamma(F)\,d\rho$.

\begin{lemma}\label{jkighiy8t} For each $\gamma\in\ddot\Gamma_f(\hat X)$, we have $d(\cdot,\gamma)\in D(\mathscr E^\Gamma)$. Furthermore, there exists $G\in L^1(\ddot\Gamma_{f}(\hat X),\rho)$ (independent of $\gamma$) such that $S^\Gamma(d(\cdot,\gamma))\le G$ $\rho$-a.e.
\end{lemma}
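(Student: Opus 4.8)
The plan is to split $d(\cdot,\gamma)=d_V(\cdot,\gamma)+d_f(\cdot,\gamma)$ and treat the two summands separately: since $D(\mathscr E^\Gamma)$ is a vector space and $\sqrt{S^\Gamma(\cdot)}$ is subadditive, a $\gamma$-independent $L^1(\rho)$-dominating function for each summand yields one for $d(\cdot,\gamma)$. For the vague part $d_V(\cdot,\gamma)$ I would follow \cite[Section~3]{MR2}: $d_V$ is assembled from cylinder functions $\la f,\cdot\ra$ with $f\in C_0^\infty(\hat X)$ composed with $1$-Lipschitz maps, so, after smoothing the absolute values, $d_V(\cdot,\gamma)\in D(\mathscr E^\Gamma)$. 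The only new feature is the presence of the weights $1/s$ and $s$ in \eqref{huf7u}; but every such $f$ has compact support in $\hat X=\Rp\times X$, on which $s$ is bounded away from $0$ and $\infty$, so these weights are bounded there and the Ma--R\"ockner estimate provides a $\gamma$-independent $G_V\in L^1(\ddot\Gamma_f(\hat X),\rho)$ with $S^\Gamma(d_V(\cdot,\gamma))\le G_V$ $\rho$-a.e.

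The substantial part is the finite-mass component. First I would prove a $\gamma$-independent pointwise bound for the building blocks $d_k(\cdot,\gamma)=\sum_{n\in\Z}|\la\varkappa_{kn},\cdot-\gamma\ra|$. Each $\la\varkappa_{kn},\cdot\ra\in\FCG$, and, smoothing $|\cdot|$ by $\sqrt{(\cdot)^2+\varepsilon}$, one reads off from \eqref{gfuyfr}, \eqref{vyd6trde} that at an atom $(s,x)$ the derivatives are $\nabla_x\varkappa_{kn}(s,x)=(\nabla_x\phi_k(x))\psi_n(s)s$ and $\nabla_s\varkappa_{kn}(s,x)=\phi_k(x)(\psi_n'(s)s+\psi_n(s))$. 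The crucial structural fact, visible from \eqref{yufr8r}, is that for each fixed $s$ at most four of the $\psi_n$, and at most four of the $\psi_n'$, are nonzero. Hence, inserting the sign factors (bounded by $1$ regardless of $\gamma$) and applying Cauchy--Schwarz over these at most four indices together with \eqref{1} and \eqref{yufr8r}, the contribution of an atom $(s,x)$ to $S^\Gamma(d_k(\cdot,\gamma))$ is dominated by $C_q\,\chi_{B(k+1)}(x)\,s$, with $C_q$ depending only on $q$ and $d$; summing over atoms gives
\begin{equation*}
S^\Gamma(d_k(\cdot,\gamma))\le C_q\,\mathfrak M_{B(k+1)},
\end{equation*}
a bound independent of $\gamma$.

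Next I would pass from the $d_k$ to $d_f(\cdot,\gamma)=\sum_k c_k\Phi(d_k(\cdot,\gamma))$, $\Phi(t)=t/(1+t)$, $0\le\Phi'\le1$, by the chain and product rules for the square field. Subadditivity of $\sqrt{S^\Gamma(\cdot)}$ gives
\begin{equation*}
\sqrt{S^\Gamma(d_f(\cdot,\gamma))}\le\sum_k c_k\,\Phi'(d_k)\sqrt{S^\Gamma(d_k(\cdot,\gamma))}\le\sum_k c_k\sqrt{S^\Gamma(d_k(\cdot,\gamma))},
\end{equation*}
and then Cauchy--Schwarz together with the previous bound yields
\begin{equation*}
S^\Gamma(d_f(\cdot,\gamma))\le\Big(\sum_k c_k\Big)\sum_k c_k\,S^\Gamma(d_k(\cdot,\gamma))\le C_q\Big(\sum_k c_k\Big)\sum_k c_k\,\mathfrak M_{B(k+1)}=:G_f,
\end{equation*}
which is again independent of $\gamma$.

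Finally, choosing the sequence $(c_k)$ (as specified below) so that $\sum_k c_k<\infty$ and $\sum_k c_k\int\mathfrak M_{B(k+1)}\,d\rho<\infty$ — possible since $\int\mathfrak M_{B(k+1)}\,d\rho=\int\eta(B(k+1))\,d\mu<\infty$ by \eqref{iyutfr76r} — makes $G_f\in L^1(\ddot\Gamma_f(\hat X),\rho)$. For the domain membership I would approximate $d_f(\cdot,\gamma)$ by the cylinder functions obtained by truncating the sums over $k$ and $n$ and smoothing the absolute values; these converge to $d_f(\cdot,\gamma)$ in $L^2(\rho)$ (uniform boundedness plus dominated convergence, $\rho$ being finite) while their energies stay bounded by $\int G_f\,d\rho$, so closedness of $\mathscr E^\Gamma$ and weak compactness (Banach--Saks) give $d_f(\cdot,\gamma)\in D(\mathscr E^\Gamma)$, and the $L^1$-continuity of $S^\Gamma$ along the converging Ces\`aro means, whose square fields are all $\le G_f$, yields $S^\Gamma(d_f(\cdot,\gamma))\le G_f$ $\rho$-a.e. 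Combining with the $d_V$ estimate via subadditivity of $\sqrt{S^\Gamma(\cdot)}$, the function $2(G_V+G_f)\in L^1(\ddot\Gamma_f(\hat X),\rho)$ dominates $S^\Gamma(d(\cdot,\gamma))$ $\rho$-a.e., uniformly in $\gamma$. The main obstacle is precisely the $d_f$ estimate: exploiting the local finiteness in $n$ of the $\psi_n$ to convert the absolute-value sum into a $\gamma$-uniform square-field bound, and then matching the growth in $k$ of $\int\mathfrak M_{B(k+1)}\,d\rho$ against the summability of $(c_k)$.
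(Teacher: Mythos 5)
Your proposal is correct and follows essentially the same route as the paper: split $d(\cdot,\gamma)=d_V(\cdot,\gamma)+d_f(\cdot,\gamma)$, defer the $d_V$ part to the methods of Ma--R\"ockner, and for $d_f$ approximate by smoothed, truncated cylinder functions whose square fields are dominated, uniformly in $\gamma$, by a constant multiple of $\sum_k c_k\,\mathfrak M_{B(k+1)}$, with $(c_k)$ chosen via \eqref{iyutfr76r} so that this dominating function lies in $L^1(\rho)$. The only differences are organizational: the paper estimates the gradients of the truncated approximants $d_f^{(N)}$ globally rather than per-$k$, and passes to the limit by showing the approximants form an $\mathscr E^\Gamma$-Cauchy sequence (via dominated convergence) instead of invoking Banach--Saks and Ces\`aro means.
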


\begin{proof} Recall that $d(\cdot,\gamma)=d_V(\cdot,\gamma)+d_f(\cdot,\gamma)$. 
 Using the methods of \cite[Section~4]{MR2} (see also \cite[Section~6]{KLR}), one can  show that $d_V(\cdot,\gamma)\in D(\mathscr E^\Gamma)$ and there exists $G_1\in L^1(\ddot\Gamma_{f}(\hat X), \rho)$  (independent of $\gamma$) such that
$S^\Gamma(d_V(\cdot,\gamma))\le G_1$ $\rho$-a.e. Hence, we only need to prove that $d_f(\cdot,\gamma)\in D(\mathscr E^\Gamma)$ and there exists $G_2\in L^1(\ddot\Gamma_{f}(\hat X), \rho)$ (independent of $\gamma$) such that
$S^\Gamma(d_f(\cdot,\gamma))\le G_2$ $\rho$-a.e.

Analogously to the proof of \cite[Lemma 4.7]{MR2}, we fix any sequence $(\zeta_n)_{n=1}^\infty$ such that $\zeta_n\in C_0^\infty(\R)$, $\int_{\R}\zeta_n(t)\,dt=1$, $\zeta_n(t)=\zeta_n(-t)$ for all $t\in\R$, $\operatorname{supp}(\zeta_n)\subset(-1/n,1/n)$. We  define
$$ u_n(t):=\int_{\R}|t-t'|\zeta_n(t')\,dt'-\int_\R|t'|\zeta_n(t')\,dt',\quad t\in\R.$$
It is easy to check that, for each $n\in\N$, $u_n\in C^\infty(\R)$,  $|u_n(t)|\le|t|$, $u_n(t)\to|t|$ as $n\to\infty$ for each $t\in\R$, $u'_n(t)\to\operatorname{sign}(t)$ as $n\to\infty$ for each $t\in\R\setminus\{0\}$, and $|u_n'(t)|\le2$ for all
$t\in\R$.

Recall \eqref{hgu8ytfk} and \eqref{rererre}. For each $N\in\N$, we define
\begin{align}&d_k^{(N)}(\gamma,\gamma'):=\sum_{n\in\Z\cap[-N,N]}u_N(\la\varkappa_{kn},\gamma-\gamma'\ra),\notag\\
&d^{(N)}_f (\gamma,\gamma'):= \sum_{k=1}^N  c_k\, \frac{d^{(N)}_k(\gamma,\gamma')}{1+d^{(N)}_k(\gamma,\gamma')},
\quad \gamma,\gamma'\in\ddot\Gamma_f(\hat X). \label{jftu}
\end{align}
Clearly, for a fixed $\gamma'\in\ddot\Gamma_f(\hat X)$,  the restriction of $d^{(N)}_f(\cdot,\gamma')$ to $\Gamma_{pf}(\hat X)$ belongs to $\FCG$. Hence, $d^{(N)}_f(\cdot,\gamma')\in D(\mathscr E^\Gamma)$.

 As easily seen, for each $\gamma\in\ddot\Gamma_{f}(\hat X)$, we have $d^{(N)}_f (\gamma,\gamma')\to d_f (\gamma,\gamma')$ as $N\to\infty$. Hence,
\begin{equation}\label{hyt97t}
d^{(N)}_f(\cdot,\gamma')\to d_f(\cdot,\gamma')\quad \text{in }L^2(\ddot\Gamma_{f}(\hat X),\rho)\text{ as }N\to\infty.\end{equation}

Note that, for $t\ge0$, $\left(\frac t{1+t}\right)'=\frac1{(1+t)^2}\le1$.
Hence, by \eqref{1}--\eqref{hgu8ytfk}, for each $\gamma\in\Gamma_{pf}(\hat X)$ and each $(s,x)\in\gamma$,
 \begin{align}
\|\nabla_x\, d_f^{(N)}(\gamma,\gamma')\|_X&\le \sum_{k=1}^N c_k\, \|\nabla_x\, d_k^{(N)}(\gamma,\gamma')\|_X \notag\\
&\le 2 \sum_{k=1}^N c_k \sum_{n\in\Z\cap[-N,N]}\|\nabla_x\,\varkappa_{kn}(x,s)\|_X \notag\\
&=2 \sum_{k=1}^N c_k\, \|\nabla\,\phi_k(x)\|_X\sum_{n\in\Z\cap[-N,N]}\psi_n(s)s \notag\\
&\le 4 \sqrt{d}\,\sum_{k=1}^\infty c_k\chi_{B(k+1)}(x)\sum_{n\in\Z\cap[-N,N]}\psi_n(s)s \notag\\
& \le
16 \sqrt{d}\,\sum_{k=1}^\infty c_k\chi_{B(k+1)}(x)s.\notag
\end{align}
Hence, using the Cauchy inequality, we conclude that there exists a constant $C_1>0$ such that
\begin{equation}\label{hyur7r}
\|\nabla_x\, d_f^{(N)}(\gamma,\gamma')\|^2_X\le C_1s^2\sum_{k=1}^\infty c_k\chi_{B(k+1)}(x).
\end{equation}

Analogously, using  \eqref{1}--\eqref{hgu8ytfk}, we get
\begin{align*}
&\big|\nabla_s d_f^{(N)}(\gamma,\gamma')\big|\le \sum_{k=1}^N c_k \big|\nabla_s d_k^{(N)}(\gamma,\gamma')\big|\\
&\quad\le 2 \sum_{k=1}^N c_k \sum_{n\in\Z\cap[-N,N]}\left|\frac{\partial}{\partial s}\,\varkappa_{kn}(x,s)\right|\\
&\quad=2 \sum_{k=1}^N c_k \phi_k(x)\sum_{n\in\Z\cap[-N,N]}|\psi'_n(s)s+\psi_n(s)|\\
&\quad \le 2 \sum_{k=1}^\infty c_k\chi_{B(k+1)}(x)\sum_{n\in\Z}\left( \frac2{q^n(1-q)}\chi_{[q^{n+1},\,q^n]\cup[q^{n-1},\,q^{n-2}]}(s)s+\chi_{[q^{n+1},\,q^{n-2}]}(s)\right)\\
&\quad \le 2 \sum_{k=1}^\infty c_k\chi_{B(k+1)}(x)\sum_{n\in\Z} \left(\frac2{q^n(1-q)}\chi_{[q^{n+1},\,q^n]\cup[q^{n-1},\,q^{n-2}]}(s)q^{n-2}+\chi_{[q^{n+1},\,q^{n-2}]}(s)\right)\\
&\quad \le 2 \sum_{k=1}^\infty c_k\chi_{B(k+1)}(x)\left(\frac8{q^2(1-q)}+4\right).
\end{align*}
Hence, there exists a constant $C_2>0$ such that
\begin{equation}\label{bhyt}
\big|\nabla_s F(\gamma)\big|^2\le C_2 \sum_{k=1}^\infty c_k\chi_{B(k+1)}(x).
\end{equation}

We define, for $\gamma\in\Gamma_{pf}(\hat X)$,
\begin{equation}\label{jkgiutrd}
G_2(\gamma):=(C_1+C_2)\sum_{(s,x)\in\gamma}s\sum_{k=1}^\infty c_k\chi_{B(k+1)}(x).\end{equation}
By the monotone convergence theorem,
\begin{align}
\int_{\ddot\Gamma_{f}(\hat X)} G_2\,d\rho&=(C_1+C_2)\sum_{k=1}^\infty c_k\int_{\Gamma_{pf}(\hat X)} \sum_{(s,x)\in\gamma}s\chi_{B(k+1)}(x)\,d\rho(\gamma)\notag\\
&=(C_1+C_2)\sum_{k=1}^\infty c_k\int_{\K(X)}\eta(B(k+1))\,d\mu(\eta).\label{hgfutt}
\end{align}
By \eqref{iyutfr76r}, we have, for each $k\in\N$,
$$ \int_{\K(X)}\eta(B(k+1))\,d\mu(\eta)<\infty.$$
So we may set
\begin{equation}\label{yut8}
 c_k:=2^{-k}\bigg(1+\int_{\K(X)}\eta(B(k+1))\,d\mu(\eta)\bigg)^{-1},\quad k\in\N.\end{equation}
Then, by \eqref{hgfutt}, we get $G_2\in L^1(\ddot\Gamma_{f}(\hat X,\rho))$. Furthermore, by \eqref{huf7u}, \eqref{hyur7r}--\eqref{jkgiutrd}, we get
\begin{equation}\label{igi}
 S^\Gamma(d_f^{(N)}(\cdot,\gamma'))\le G_2 \quad \text{point-wise on }\Gamma_{pf}(\hat X).\end{equation}

Using \eqref{igi} and the dominated convergence theorem, it is not hard to prove that
\begin{equation}\label{iglyt78p}\mathscr E^\Gamma\big(d_f^{(N)}(\cdot,\gamma')-d_f^{(M)}(\cdot,\gamma')\big)\to0\quad\text{as }N,M\to\infty.\end{equation}
Hence, $\big(d_f^{(N)}(\cdot,\gamma')\big)_{N=1}^\infty$ is a Cauchy sequence in
$(D(\mathscr E^\Gamma),\|\cdot\|_{D(\mathscr E^\Gamma)})$.  Hence, by \eqref{hyt97t} and \eqref{iglyt78p}, $d_f(\cdot,\gamma')\in D(\mathscr E^\Gamma)$. Furthermore, since $d_f^{(N)}(\cdot,\gamma')\to d_f(\cdot,\gamma')$ in the $\|\cdot\|_{D(\mathscr E^\Gamma)}$ norm,
$$S^\Gamma(d_f^{(N)}(\cdot,\gamma'))\to S^\Gamma(d_f(\cdot,\gamma'))\quad\text{in }L^1(\ddot\Gamma_{f}(\hat X),\rho)\text{ as }N\to\infty.$$
Hence, by \eqref{igi}, $S^\Gamma(d_f(\cdot,\gamma))\le G_2$ $\rho$-a.e.
\end{proof}

By \cite[Proposition~4.1]{MR2} (see also \cite[Theorem~3.4]{RS}), Proposition \ref{jkgiugt} and Lemma~\ref{jkighiy8t} imply the following proposition.

\begin{proposition}\label{gfuy} The Dirichlet form $(\mathscr E^\Gamma,D(\mathscr E^\Gamma))$ on $L^2(\ddot\Gamma_f(\hat X),\rho)$ is quasi-regular.
\end{proposition}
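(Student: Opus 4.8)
The plan is to obtain quasi-regularity as a direct application of the abstract criterion for Dirichlet forms on configuration-type spaces from \cite[Proposition~4.1]{MR2} (cf.\ also \cite[Theorem~3.4]{RS}). That criterion asserts quasi-regularity of a Dirichlet form $(\mathscr E,D(\mathscr E))$ on $L^2(E,m)$, where $(E,d)$ is a complete separable metric space, provided the form admits a square field (carr\'e du champ) operator $S$ and, decisively, for every $\gamma\in E$ the distance function $d(\cdot,\gamma)$ belongs to $D(\mathscr E)$ with $S(d(\cdot,\gamma))$ dominated $m$-a.e.\ by a single fixed function $G\in L^1(E,m)$. My task therefore reduces to matching the objects at hand to the hypotheses of this criterion.

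First I would take $E=\ddot\Gamma_f(\hat X)$ equipped with the metric $d(\cdot,\cdot)$ built in Step~2, and $m=\rho$. By Proposition~\ref{jkgiugt}, $(E,d)$ is complete and separable, supplying the ambient metric-space hypothesis. A point requiring care is that $(\mathscr E^\Gamma,D(\mathscr E^\Gamma))$ was originally introduced using the vague topology, whereas the criterion is phrased in terms of the metric $d$; this is reconciled by Lemma~\ref{bugyfyfy}, which gives $\mathscr B(\ddot\Gamma_f(\hat X))=\mathscr B(\ddot\Gamma_f(\hat X),d)$, so the form may legitimately be viewed on $L^2(E,\rho)$ with $E$ carrying the metric topology. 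Next I would note that $\mathscr E^\Gamma$ indeed possesses a square field operator, namely the operator $S^\Gamma$ of \eqref{huf7u}, which extends continuously in the $\|\cdot\|_{D(\mathscr E^\Gamma)}$-norm and satisfies $\mathscr E^\Gamma(F,F)=\int_{\ddot\Gamma_f(\hat X)}S^\Gamma(F)\,d\rho$.

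The final and decisive hypothesis—that $d(\cdot,\gamma)\in D(\mathscr E^\Gamma)$ with $S^\Gamma(d(\cdot,\gamma))\le G$ $\rho$-a.e.\ for one fixed $G\in L^1(\ddot\Gamma_f(\hat X),\rho)$ independent of $\gamma$—is exactly the content of Lemma~\ref{jkighiy8t}. With the complete separable metric structure, the Borel compatibility, the square field operator, and this uniform domination all in hand, \cite[Proposition~4.1]{MR2} applies verbatim and yields the quasi-regularity of $(\mathscr E^\Gamma,D(\mathscr E^\Gamma))$.

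The genuine difficulty does not lie in this concluding step, which is a bookkeeping invocation of the citation, but rather in the input already supplied by Lemma~\ref{jkighiy8t}: the construction of a \emph{uniform} $L^1(\rho)$-dominating function for the square field of the distance function. The obstacle there is the mass coordinate $s\in\Rp$, since the extra metric term $d_f$ must be controlled in both the spatial direction $\nabla_x$ and the mass direction $\nabla_s$, and configurations in $\ddot\Gamma_f(\hat X)$ carry infinitely many atoms with masses accumulating near $s=0$; this forces the careful choice of the cutoffs $\phi_k,\psi_n$ in \eqref{1}--\eqref{hgu8ytfk} and of the summable weights $c_k$ in \eqref{yut8}, so that the bound $G_2$ of \eqref{jkgiutrd} is $\rho$-integrable by virtue of the moment bound \eqref{iyutfr76r}. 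It is precisely the passage to the enlarged space $\ddot\Gamma_f(\hat X)$, rather than $\Gamma_{pf}(\hat X)$, that renders the complete separable metric of Proposition~\ref{jkgiugt} available and thereby makes the criterion applicable at all.
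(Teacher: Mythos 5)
Your proposal is correct and follows essentially the same route as the paper: the paper's proof of Proposition~\ref{gfuy} is precisely the observation that Proposition~\ref{jkgiugt} (completeness and separability of $(\ddot\Gamma_f(\hat X),d(\cdot,\cdot))$) and Lemma~\ref{jkighiy8t} (membership of $d(\cdot,\gamma)$ in $D(\mathscr E^\Gamma)$ with a uniform $L^1(\rho)$-dominating function for its square field) allow one to invoke \cite[Proposition~4.1]{MR2} (see also \cite[Theorem~3.4]{RS}). Your additional remarks on the role of Lemma~\ref{bugyfyfy} and on where the real difficulty lies (in establishing Lemma~\ref{jkighiy8t}) are accurate and consistent with the paper's structure.
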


{\it Step 4.} We will now construct a corresponding diffusion process on $\ddot\Gamma_f(\hat X)$.

\begin{lemma}\label{hgiutg} The Dirichlet form  $(\mathscr E^\Gamma,D(\mathscr E^\Gamma))$ has local property, i.e., $\mathscr E^\Gamma(F,G)=0$ provided $F,G\in D(\mathscr E^\Gamma)$ with $\operatorname{supp}(|F|\rho)\cap \operatorname{supp}(|G|\rho)=\varnothing$.
\end{lemma}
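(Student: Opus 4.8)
The plan is to establish the local property of $(\mathscr E^\Gamma,D(\mathscr E^\Gamma))$ by using the explicit form of the square field operator $S^\Gamma$ together with a standard truncation argument. First I would recall that, by the very definition of the square field operator in \eqref{huf7u}, for $F\in\FCG$ the form is a pure \emph{carr\'e du champ} with no zeroth-order (killing) part: it only involves the gradients $\nabla_x F$ and $\nabla_s F$, which are local in the sense that they depend only on the behaviour of $F$ near $\gamma$. This is the structural reason the local property should hold, and it is inherited by the closure $D(\mathscr E^\Gamma)$ because $S^\Gamma$ extends continuously in the $\|\cdot\|_{D(\mathscr E^\Gamma)}$-norm and satisfies $\mathscr E^\Gamma(F,F)=\int_{\ddot\Gamma_f(\hat X)}S^\Gamma(F)\,d\rho$.

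The key steps, in order, would be as follows. I would first reduce the claim to the diagonal statement: by polarization it suffices to show that $\mathscr E^\Gamma(F,G)=0$ when $\operatorname{supp}(|F|\rho)\cap\operatorname{supp}(|G|\rho)=\varnothing$, and by the Cauchy--Schwarz inequality for the bilinear form it is enough to control $\mathscr E^\Gamma$ on suitable approximants. Second, I would invoke the general criterion for locality from the theory of Dirichlet forms, namely \cite[Chap.~V]{MR1}: a quasi-regular Dirichlet form admitting a carr\'e du champ with the derivation (Leibniz) property is local provided it has no jump and no killing part in its Beurling--Deny decomposition. Since the generator $L^\Gamma$ (the image of $L^\K$ under $\mathscr R$) is a second-order differential operator on cylinder functions with no zeroth-order term, both the jumping measure and the killing measure vanish. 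Third, to verify this rigorously one composes: for $F\in\FCG$ and a smooth function $\Phi$ with $\Phi(0)=0$, one checks the chain rule $S^\Gamma(\Phi\circ F)=(\Phi'\circ F)^2\,S^\Gamma(F)$ directly from \eqref{huf7u}, and then applies the standard truncation argument (replacing $F$ by $\Phi_\varepsilon\circ F$ with $\Phi_\varepsilon$ vanishing on a neighbourhood of $0$) to separate the supports, using the disjointness of $\operatorname{supp}(|F|\rho)$ and $\operatorname{supp}(|G|\rho)$.

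Concretely, I would argue that since the two supports are disjoint closed sets, one can find functions in $D(\mathscr E^\Gamma)$ that interpolate and show the integrand $S^\Gamma$-type cross term vanishes $\rho$-a.e.\ on the overlap, which is empty. The cleanest route is to note that the pointwise expression
$$
\mathscr E^\Gamma(F,G)=\int_{\ddot\Gamma_f(\hat X)}\sum_{(s,x)\in\gamma}\Big[\tfrac1s\la\nabla_x F(\gamma),\nabla_x G(\gamma)\ra_X+s\,\nabla_s F(\gamma)\,\nabla_s G(\gamma)\Big]d\rho(\gamma)
$$
is an integral of a product of gradients of $F$ and $G$; on the set where $F=0$ $\rho$-a.e.\ its gradients vanish $\rho$-a.e.\ (this is exactly the content of Lemma~\ref{bhuyft7} transported to $\Gamma_{pf}(\hat X)$), and symmetrically for $G$, so the integrand is supported on $\operatorname{supp}(|F|\rho)\cap\operatorname{supp}(|G|\rho)=\varnothing$ and hence vanishes.

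The main obstacle I anticipate is the rigorous justification that the gradients $\nabla_x F$ and $\nabla_s F$ genuinely vanish $\rho$-a.e.\ off $\operatorname{supp}(|F|\rho)$ for general $F\in D(\mathscr E^\Gamma)$ rather than merely for cylinder functions $F\in\FCG$; this requires a careful approximation in the graph norm $\|\cdot\|_{D(\mathscr E^\Gamma)}$ together with the fact that $S^\Gamma$ is a local functional. The truncation via the functions $u_n$ (already introduced in the proof of Lemma~\ref{jkighiy8t}), which approximate $t\mapsto|t|$ with $|u_n'|\le 2$, provides exactly the tool needed to build the separating cut-off functions while keeping the square field operator dominated, so I would reuse that machinery here. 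Once the no-killing structure is established, the local property follows from the general theory as in \cite[Section~4]{MR2} and \cite{MR1}.
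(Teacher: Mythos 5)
Your ``cleanest route'' coincides with the paper's own proof, which simply cites \cite[Proposition~4.12]{MR2}: there, exactly as in your sketch, one writes $\mathscr E^\Gamma(F,G)$ as the integral of the bilinear square field, shows via the chain rule and a truncation argument that the square field of $F$ vanishes $\rho$-a.e.\ off $\operatorname{supp}(|F|\rho)$, and concludes from the disjointness of the two supports. One correction: that support property is \emph{not} ``exactly the content of Lemma~\ref{bhuyft7}''---that lemma only states the global implication that $F=0$ $\mu$-a.e.\ everywhere forces $\mathscr E^{\K}(F,G)=0$, not that gradients vanish a.e.\ on the set where $F$ vanishes---so it is the chain-rule/truncation machinery of your final paragraph (not Lemma~\ref{bhuyft7}, and not the superfluous Beurling--Deny detour of your second step, whose verification of a vanishing jump part is essentially equivalent to proving locality itself) that actually carries the proof, just as it does in \cite{MR2}.
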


\begin{proof}
Identical to the proof of \cite[Proposition~4.12]{MR2}.
\end{proof}

 As a consequence of Proposition \ref{gfuy}, Lemma \ref{hgiutg}, and
\cite[Chap.~IV, Theorem~3.5, and Chap.~V, Theorem~1.11]{MR1}, we
obtain

 \begin{proposition}\label{hjugfu7t}
There exists a conservative diffusion process on the metric space \linebreak $(\ddot\Gamma_f(\hat X),d(\cdot,\cdot))$,
 $$M^\Gamma =(\Omega^\Gamma ,\mathscr
F^\Gamma ,(\mathscr  F^\Gamma _t)_{t\ge0},( \Theta^\Gamma _t)_{t\ge0}, (\mathfrak X^\Gamma (t))_{t\ge 0},(\mathbb P^\Gamma  _\gamma)_{\gamma\in\ddot\Gamma_f(\hat X)}),$$
  which is properly associated with the Dirichlet form $(\mathscr E^\Gamma,D(\mathscr E^\Gamma))$.
   Here $\Omega^\Gamma=\linebreak C([0,\infty)\to\ddot\Gamma_f(\hat X))$, $\mathfrak X^\Gamma(t)(\omega)=\omega(t)$, $t\ge 0$, $\omega\in\Omega^\Gamma$,
$(\mathscr  F^\Gamma_t)_{t\ge 0}$ together with $\mathscr F^\Gamma$ is the corresponding
minimum completed admissible family, and $\Theta^\Gamma_t$, $t\ge0$, are the
corresponding natural time shifts.
  This process is up to $\rho$-equivalence unique.
\end{proposition}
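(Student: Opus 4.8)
The plan is to read the process off directly from the general theory of quasi-regular Dirichlet forms, now that the two structural inputs demanded by that theory are in place: quasi-regularity of $(\mathscr E^\Gamma,D(\mathscr E^\Gamma))$ is Proposition~\ref{gfuy}, and the local property is Lemma~\ref{hgiutg}. The ambient state space is the complete separable metric space $(\ddot\Gamma_f(\hat X),d(\cdot,\cdot))$ of Proposition~\ref{jkgiugt}, and by Lemma~\ref{bugyfyfy} its Borel $\sigma$-algebra agrees with $\mathscr B(\ddot\Gamma_f(\hat X))$; hence $\rho$ is a probability measure and $(\mathscr E^\Gamma,D(\mathscr E^\Gamma))$ a Dirichlet form precisely in the setting of \cite{MR1}.

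First I would apply \cite[Chap.~IV, Theorem~3.5]{MR1}: every quasi-regular Dirichlet form is properly associated with a $\rho$-tight special standard process on its state space. This produces the measures $(\mathbb P^\Gamma_\gamma)_{\gamma\in\ddot\Gamma_f(\hat X)}$ and the transition semigroup $(p_t^\Gamma)_{t\ge0}$, together with the proper association: for $F\in L^2(\ddot\Gamma_f(\hat X),\rho)$ the function $p_t^\Gamma F$ is an $\mathscr E^\Gamma$-quasi-continuous $\rho$-version of $\exp(tL^\Gamma)F$. Next, feeding the local property of Lemma~\ref{hgiutg} into \cite[Chap.~V, Theorem~1.11]{MR1} upgrades this process to a diffusion, i.e.\ it admits a modification whose sample paths are $d$-continuous up to the lifetime $\zeta$. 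Passing to the canonical realisation then yields $\Omega^\Gamma=C([0,\infty)\to\ddot\Gamma_f(\hat X))$, the coordinate maps $\mathfrak X^\Gamma(t)(\omega)=\omega(t)$, the minimum completed admissible family $(\mathscr F^\Gamma_t)_{t\ge0}$ together with $\mathscr F^\Gamma$, and the natural time shifts $\Theta^\Gamma_t$. Because $d\ge d_V$, $d$-continuity entails continuity in the vague topology, which is the notion relevant for the eventual transfer to $\K(X)$.

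It then remains only to verify conservativeness, namely $\zeta=\infty$ $\mathbb P^\Gamma_\gamma$-a.s. Here I would use that the constant function $\mathbf 1$ lies in $\FCG\subset D(\mathscr E^\Gamma)$ (take $N=1$ and $g\equiv1$ in the definition of $\FCG$), and that $\mathscr E^\Gamma(\mathbf 1,\mathbf 1)=0$ because the square field operator \eqref{huf7u} kills constants. By the Cauchy--Schwarz inequality for the form this gives $\mathscr E^\Gamma(\mathbf 1,u)=0$ for every $u\in D(\mathscr E^\Gamma)$, so $\mathbf 1\in D(L^\Gamma)$ with $L^\Gamma\mathbf 1=0$; since $\rho(\ddot\Gamma_f(\hat X))=1$, this forces $p_t^\Gamma\mathbf 1=\mathbf 1$ $\rho$-a.e.\ for all $t>0$, i.e.\ the semigroup, and hence $M^\Gamma$, is conservative. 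The uniqueness of $M^\Gamma$ up to $\rho$-equivalence is then exactly \cite[Chap.~IV, Sect.~6]{MR1}.

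Since the genuinely hard analytic work---verifying quasi-regularity and locality---has already been discharged in the preceding lemmas, I do not expect this step to present a substantial obstacle; it is essentially an invocation of the Ma--R\"ockner correspondence. The only points requiring mild care are the conservativeness argument (where I would confirm that finiteness of $\rho$ together with $\mathbf 1\in D(\mathscr E^\Gamma)$ of zero energy gives $p_t^\Gamma\mathbf 1=\mathbf 1$, rather than merely recurrence) and the reconciliation of the two topologies on $\ddot\Gamma_f(\hat X)$; the latter is harmless because $d$ is finer than the vague topology while sharing its Borel structure by Lemma~\ref{bugyfyfy}.
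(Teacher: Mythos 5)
Your proposal is correct and takes essentially the same route as the paper: the paper's proof of Proposition~\ref{hjugfu7t} consists precisely of invoking Proposition~\ref{gfuy} (quasi-regularity), Lemma~\ref{hgiutg} (locality), and \cite[Chap.~IV, Theorem~3.5, and Chap.~V, Theorem~1.11]{MR1}. Your explicit conservativeness argument via $\mathbf 1\in\FCG$, $\mathscr E^\Gamma(\mathbf 1,\mathbf 1)=0$, hence $p_t^\Gamma\mathbf 1=\mathbf 1$, is sound and fills in a point the paper leaves implicit in the citation.
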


{\it Step 5.} We will now show that the diffusion process from Proposition \ref{hjugfu7t} lives, in fact,  on the smaller space $\Gamma_{pf}(\hat X)$. This is where we use that the dimension $d$ of the underlying space $X$ is $\ge2$.

\begin{proposition}
The set $\ddot\Gamma_f(\hat X)\setminus \Gamma_{pf}(\hat X)$ is $\mathscr E^\Gamma$-exceptional. Thus, the statement of Proposition \ref{hjugfu7t} remains true if we replace in it $\ddot\Gamma_f(\hat X)$ with $\Gamma_{pf}(\hat X)$.
\end{proposition}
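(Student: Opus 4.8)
The plan is to prove that the diffusion $M^\Gamma$ from Proposition~\ref{hjugfu7t} almost surely never visits the set $\ddot\Gamma_f(\hat X)\setminus\Gamma_{pf}(\hat X)$, which by the general theory of properly associated processes \cite{MR1} is guaranteed once this set is shown to be $\mathscr E^\Gamma$-exceptional (i.e.\ of zero capacity); the state space may then be reduced to $\Gamma_{pf}(\hat X)$ and the remaining assertions of Proposition~\ref{hjugfu7t} transfer verbatim. As a preliminary observation, since $\rho$ is the image of the Gibbs measure $\mu$ under $\mathscr R^{-1}$ and $\mu$ is carried by $\K(X)$, Proposition~\ref{ghig} yields $\rho(\ddot\Gamma_f(\hat X)\setminus\Gamma_{pf}(\hat X))=0$. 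The real task is to upgrade this $\rho$-null property to $\mathscr E^\Gamma$-exceptionality, which is strictly stronger and is where the capacity estimates enter.

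I would decompose the bad set according to which defining property of $\Gamma_{pf}(\hat X)$ fails. Property (ii) holds automatically on $\ddot\Gamma_f(\hat X)$, so the relevant pieces are: the configurations that fail to be simple and locally finite as elements of $\Gamma(\hat X)$ (a point of $\hat X$ carries multiplicity $\ge2$ or $\infty$, or points accumulate in a compact of $\hat X$), and the ``diagonal'' set $\Sigma$ of configurations possessing two distinct atoms $(s_1,x_1),(s_2,x_2)$ with coinciding $X$-projections $x_1=x_2$. The set $\Sigma$ is the only dimension-sensitive piece, and it is precisely here that the hypothesis $d\ge2$ is used.

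To treat $\Sigma$, I would localize, writing $\Sigma=\bigcup_k\Sigma_k$, where $\Sigma_k$ collects configurations having two such atoms with $X$-coordinates in $B(k)$ and masses in a fixed compact $s$-window; by countable subadditivity of capacity it suffices to show each $\Sigma_k$ exceptional. On such a bounded region the moment bound \eqref{iyutfr76r} lets me restrict to configurations with a controlled number of atoms, and thereby compare $\mathscr E^\Gamma$ with a finite-particle Dirichlet form on $(\hat X)^n$. In the square field \eqref{huf7u} each atom's $X$-motion carries the nondegenerate, strictly positive coefficient $1/s$ (a benign time change), while the interaction enters only through the bounded, finite-range potential $\phi$ by (C1); consequently the relative coordinate $x_i-x_j$ of two atoms is itself a nondegenerate diffusion on $X=\R^d$, and a coincidence $x_i=x_j$ amounts to this diffusion hitting the origin. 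For $d\ge2$ a nondegenerate diffusion in $\R^d$ does not hit a single point, so $\{x_i=x_j\}$ has zero capacity; hence $\Sigma_k$, and therefore $\Sigma$, is $\mathscr E^\Gamma$-exceptional. The non-simple, non-locally-finite and infinite-multiplicity pieces are $\rho$-null and, by the same localization together with the isomorphism $\mathscr R$ of Proposition~\ref{ghig}, are contained in exceptional sets handled exactly as for the ordinary configuration space in \cite{MR2,RS2,AKR}.

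The step I expect to be the main obstacle is the comparison that transfers the finite-dimensional non-collision fact to the infinite-dimensional form: one must construct, for each $\Sigma_k$, functions in $D(\mathscr E^\Gamma)$ approximating its indicator with arbitrarily small $\mathscr E^\Gamma$-energy, while uniformly controlling the $s$-dynamics (coefficient $s$ in \eqref{huf7u}) and the drift contributions produced by $\nabla\log l$ and by $\phi$ in the generator \eqref{huf7tur}, so that the capacity bound genuinely descends from the uniformly elliptic finite-particle computation. Once this comparison is secured, the general theory \cite{MR1} that $\mathscr E^\Gamma$-exceptional sets are avoided by the associated diffusion completes the proof.
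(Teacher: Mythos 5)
Your proposal follows essentially the same route as the paper: the paper's own proof of this proposition is nothing more than a reference to the collision-set capacity arguments of \cite[Proposition~1 and Corollary~1]{RS2} and \cite[Theorem~6.3]{KLR}, and your outline — decompose $\ddot\Gamma_f(\hat X)\setminus\Gamma_{pf}(\hat X)$ into localized collision sets (with masses confined to compact $s$-windows) and annihilate each by capacity estimates resting on the polarity of points in $\R^d$ for $d\ge2$ — is exactly the strategy of those cited proofs. The step you flag as the main obstacle (explicitly building small-energy functions in $D(\mathscr E^\Gamma)$ dominating the indicators, with cutoffs keeping the coefficients $1/s$ and $s$ in \eqref{huf7u} under control) is precisely what those references carry out, so your plan coincides with the proof the paper invokes.
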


\begin{proof} The proof of this statement
is similar to the proof of \cite[Proposition~1 and Corollary 1]{RS2}, see also the proof of \cite[Theorem~6.3]{KLR}. \end{proof}

{\it Step 6.}  We will now prove that the mapping $\mathscr R$ is continuous with respect to the $d(\cdot,\cdot)$ metric.

\begin{proposition}\label{gsxgyaf} The mapping $\mathscr R$ acts continuously from the metric space $(\Gamma_{pf}(\hat X),d(\cdot,\cdot))$ into the space $\K(X)$ endowed with the vague topology.
\end{proposition}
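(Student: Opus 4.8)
The plan is to show that if $\gamma_i\to\gamma$ in the metric $d(\cdot,\cdot)$ on $\Gamma_{pf}(\hat X)$, then $\mathscr R\gamma_i\to\mathscr R\gamma$ vaguely in $\K(X)$, i.e., $\la f,\mathscr R\gamma_i\ra\to\la f,\mathscr R\gamma\ra$ for every $f\in C_0(X)$. Writing $\gamma_i=\{(s_j^{(i)},x_j^{(i)})\}$, we have $\la f,\mathscr R\gamma_i\ra=\sum_j s_j^{(i)}f(x_j^{(i)})=\int_{\hat X}s\,f(x)\,d\gamma_i(s,x)$, so the task is to pass to the limit in integrals of the test function $(s,x)\mapsto s\,f(x)$ against $\gamma_i$. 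The difficulty is that this test function does \emph{not} have compact support in $\hat X$ (it does not vanish as $s\downarrow0$), so vague convergence $\gamma_i\to\gamma$ alone — which only controls integrals of functions in $C_0(\hat X)$ — is insufficient. This is precisely the obstruction that the $d_f$-component of the metric was designed to overcome, and it is the main obstacle in the proof.

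First I would fix $f\in C_0(X)$, choose $k\in\N$ so large that $\operatorname{supp}(f)\subset B(k)$, and recall from \eqref{hgu8ytfk} that $\varkappa_{kn}(s,x)=\phi_k(x)\psi_n(s)s$. Using that $\phi_k\equiv1$ on $B(k)\supset\operatorname{supp}(f)$ together with $\sum_{n\in\Z}\psi_n(s)\ge1$ (indeed the $\psi_n$ form a locally finite partition-type cover of $\Rp$ by \eqref{yufr8r}), I would write the desired functional as a sum over $n\in\Z$ of integrals of the genuinely compactly supported functions $(s,x)\mapsto f(x)\psi_n(s)s\big/\big(\sum_m\psi_m(s)\big)\in C_0(\hat X)$, or more directly decompose $s\,f(x)$ dyadically in the $s$-variable via the functions $\psi_n$. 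The point of the decomposition is that each piece lives on a set $\{q^{n+1}\le s\le q^{n-2}\}\times B(k)$ which is compact in $\hat X$, so for each fixed $n$ vague convergence gives convergence of that piece.

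The remaining step is to control the tail of the $n$-sum uniformly in $i$. Convergence in $d$ forces $d_k(\gamma_i,\gamma)\to0$, and convergence in the $\ell^1(\Z)$-sense of the sequences $(\la\varkappa_{kn},\gamma_i\ra)_{n\in\Z}$ (exactly as established in the proof of Proposition~\ref{jkgiugt}) yields $\sum_{n\in\Z}|\la\varkappa_{kn},\gamma_i-\gamma\ra|\to0$ as $i\to\infty$. This $\ell^1$-control is uniform over the tail: given $\varepsilon>0$, I would pick $n_0$ so that $\sum_{|n|>n_0}\la\varkappa_{kn},\gamma\ra<\varepsilon$ and, using the $\ell^1(\Z)$-convergence, also $\sup_i\sum_{|n|>n_0}\la\varkappa_{kn},\gamma_i\ra<2\varepsilon$ for all large $i$. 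Then the finitely many terms $|n|\le n_0$ converge by vague convergence, while the tails are uniformly small, giving $\la f,\mathscr R\gamma_i\ra\to\la f,\mathscr R\gamma\ra$.

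Finally I would assemble these ingredients: since $d_V$ dominates the vague topology and $d_f$ supplies the $\ell^1$-tail control through the $\varkappa_{kn}$, convergence in $d(\cdot,\cdot)$ implies convergence of $\la f,\mathscr R\gamma_i\ra$ for every $f\in C_0(X)$, which is exactly vague convergence of $\mathscr R\gamma_i$ to $\mathscr R\gamma$ in $\K(X)$. Hence $\mathscr R$ is continuous from $(\Gamma_{pf}(\hat X),d(\cdot,\cdot))$ into $\K(X)$ with the vague topology, as claimed. The bulk of the work is the uniform tail estimate, which is where the specific construction of $d_f$ — and in particular the weight $s$ in $\varkappa_{kn}$ matching the weight $s$ in $\la f,\mathscr R\gamma\ra$ — does the essential job.
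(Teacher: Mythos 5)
Your proposal is correct and takes essentially the same route as the paper's proof: both arguments control the small-$s$ and large-$s$ tails uniformly in $i$ through the $\ell^1$-type estimate supplied by $d_k(\gamma_i,\gamma)\to 0$, and handle the remaining region, compact in the $s$-variable, by vague convergence coming from the $d_V$-part of the metric. The only difference is cosmetic: the paper multiplies $f(x)s$ by a single cutoff $\xi\in C_0(\Rp)$ with $\chi_{[q^N,\,q^{-N}]}\le\xi\le1$, whereas you decompose $f(x)s$ into countably many $C_0(\hat X)$ pieces via the $\psi_n$ and sum; both reduce to the same three-epsilon argument.
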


\begin{proof}
 Let $\{\gamma_i\}_{i=1}^\infty\subset\Gamma_{pf}(\hat X)$ and $\gamma\in\Gamma_{pf}(\hat X)$. Let  $d(\gamma_i,\gamma)\to0$ as $i\to\infty$. We have to prove that $\mathscr R\gamma_i\to\mathscr R\gamma$ vaguely as $i\to\infty$.

So  fix any $f\in C_0(X)$ and  $\varepsilon>0$. Choose $k\in\N$ such that $\operatorname{supp}(f)\subset B(k)$. Choose $N\in\N$ such that
\begin{equation}\label{gyiur7oih}
\sum_{n\in\Z,\, |n|\ge N}\la \varkappa_{kn},\gamma\ra\le\varepsilon.
\end{equation}
Since $d(\gamma_i,\gamma)\to0$, we have $d_k(\gamma_i,\gamma)\to0$. Hence, there exists  $I\in\N$ such that
\begin{equation}\label{byufyr87}
\sum_{n\in\Z,\, |n|\ge N}\la\gamma_i,\varkappa_{kn}\ra\le2\varepsilon,\quad i\ge I.
\end{equation}
By \eqref{1}--\eqref{hgu8ytfk}, \eqref{gyiur7oih}, and \eqref{byufyr87},
\begin{align*}
&\int_{B(k)\times \left((0,\,q^N)\cup(q^{-N},\,\infty)\right)} s\,d\gamma(x,s)\le\varepsilon,\\
&\int_{B(k)\times \left((0,\,q^N)\cup(q^{-N},\,\infty)\right)} s\,d\gamma_i(x,s)\le2\varepsilon,\quad i\ge I.
\end{align*}
Therefore,
\begin{align}
&\int_{B(k)\times \left((0,\,q^N)\cup(q^{-N},\,\infty)\right)}|f(x)| s\,d\gamma(x,s)\le\varepsilon\|f\|_\infty,\notag\\
 &\int_{B(k)\times \left((0,\,q^N)\cup(q^{-N},\,\infty)\right)}|f(x)| s\,d\gamma_i(x,s)\le2\varepsilon\|f\|_\infty,\quad i\ge I,
\label{hg8t}
\end{align}
where $\|f\|_\infty$ is the supremum norm of the function $f$. Fix any $\xi\in  C_0(\Rp)$ such that
\begin{equation}\label{koh9t7ptp}\chi_{[q^{N},\,q^{-N}]}\le \xi\le1.\end{equation}
Since the function $f(x)\xi(s)s$ is from $C_0(\hat X)$, by the vague convergence
$$\int_{\hat X}f(x)\xi(s)s\,d\gamma_i(x,s)\to \int_{\hat X}f(x)\xi(s)s\,d\gamma(x,s)\quad\text{as }i\to\infty.$$
Hence, there exists $I_1\ge I$ such that
\begin{equation}\label{iyftr8rt} \bigg|\int_{\hat X}f(x)\xi(s)s\,d(\gamma_i-\gamma)(x,s)\bigg|\le \varepsilon,\quad i\ge I_1.\end{equation}
By \eqref{hg8t}--\eqref{iyftr8rt}, for all $i\ge I_1$,
\begin{align}
&\bigg|\int_{B(k)\times[q^{N},q^{-N}]}f(x)s\,d(\gamma_i-\gamma)(x,s)\bigg|=\bigg|\int_{B(k)\times[q^{N},q^{-N}]}f(x)\xi(s)s\,d(\gamma_i-\gamma)(x,s)\bigg|\notag\\
&\quad\le \bigg|\int_{\hat X}f(x)\xi(s)s\,d(\gamma_i-\gamma)(x,s)\bigg|\notag\\
&\qquad+ \bigg|
\int_{B(k)\times \left((0,\,q^N)\cup(q^{-N},
\,\infty)\right)}f(x)\xi(s)s\,d\gamma_i(x,s)
\bigg|\notag\\
&\qquad+\bigg|\int_{B(k)\times \left((0,\,q^N)\cup(q^{-N},\,\infty)\right)}f(x)\xi(s)s\,d\gamma(x,s)
\bigg|\notag\\
&\quad\le\varepsilon(1+3\|f\|_\infty).\label{lohoy}
\end{align}
By \eqref{hg8t} and \eqref{lohoy}, for all $i\ge I_1$,
$$\bigg|\int_X f(x)\,d(\mathscr R\gamma_i-\mathscr R\gamma)(x)\bigg|=\bigg|
\int_{\hat X}f(x)s\,d(\gamma_i-\gamma)(x,s)\bigg|\le \varepsilon(1+6\|f\|_\infty).  $$
Thus, the proposition is proven.
\end{proof}

{\it Step 7.} Finally, to construct the process $M^\K$ on $\K(X)$, we just map the
process $M^\Gamma$ from Proposition~\ref{hjugfu7t} onto $\K(X)$ by
 using the bijective mapping $\mathscr R :\Gamma_{pf}(\hat X)\to\mathbb K(X)$. Proposition~\ref{gsxgyaf}
ensures that the sample paths of the obtained Markov process are continuous in the vague topology on $\K(X)$.

\section*{Acknowledgements}
The authors   acknowledge the financial support of the SFB~701 ``Spectral structures and topological methods in mathematics'' (Bielefeld University).

\end{document}